\definecolor{refkey}{gray}{0.4}
\definecolor{labelkey}{gray}{0.3}
\numberwithin{equation}{section}
\def\grad{\nabla}
\def\cO{\mathcal{O}}
\newcommand{\integers}{\mathbb{Z}}
\newcommand{\proc}[1]{\textnormal{\scshape#1}}
\newcommand{\argmin}{\mathop{\rm argmin}}
\newcommand{\be}{\begin{equation}}
\newcommand{\ee}{\end{equation}}
\newcommand{\ba}{\begin{array}}
\newcommand{\ea}{\end{array}}
\newcommand{\bea}{\begin{eqnarray}}
\newcommand{\eea}{\end{eqnarray}}
\newcommand{\beaa}{\begin{eqnarray*}}
\newcommand{\eeaa}{\end{eqnarray*}}
\newcommand{\half}{\frac{1}{2}}
\newcommand{\br}{\mathbb{R}}
\newcommand{\mtn}{{m \times n}}
\newcommand{\LCal}{\mathcal{L}}
\newcommand{\sgn}{\mathrm{sgn}}
\newcommand{\etal}{{et al. }}
\newcommand{\Diag}{\mbox{Diag}}
\newcommand{\vvec}{\mbox{vec}}
\newcommand{\reals}{\br}
\newcommand{\norm}[1]{\| #1 \|}
\begin{document}

\title{Efficient Algorithms for Robust and Stable Principal Component Pursuit Problems}
\date{}
\author{Necdet Serhat Aybat\footnotemark[1] \and Donald Goldfarb\footnotemark[2] \and Shiqian Ma\footnotemark[3]}
\renewcommand{\thefootnote}{\fnsymbol{footnote}}

\footnotetext[1]{Department of Industrial Engineering, Penn State University. Email: nsa10@psu.edu}
\footnotetext[2]{Department of Industrial Engineering and Operations
Research, Columbia University. Email: goldfarb@columbia.edu.} %Research supported in part by NSF Grants DMS 06-06712 and DMS 10-16571, ONR Grant N00014-08-1-1118 and DOE Grant DE-FG02-08ER25856. }
\footnotetext[3]{Department of Systems Engineering and Engineering Management, The Chinese University of Hong Kong, Shatin, Hong Kong. Email: sqma@se.cuhk.edu.hk}

\renewcommand{\thefootnote}{\arabic{footnote}}

\maketitle %\centerline{July 7, 2013}

\begin{abstract} The problem of recovering a low-rank matrix from a set of observations corrupted with gross sparse error is known as the robust principal component analysis (RPCA) and has many applications in computer vision, image processing and web data ranking. It has been shown that under certain conditions, the solution to the NP-hard RPCA problem can be obtained by solving a convex optimization problem, namely the robust principal component pursuit (RPCP). Moreover, if the observed data matrix has also been corrupted by a dense noise matrix in addition to gross sparse error, then the stable principal component pursuit (SPCP) problem is solved to recover the low-rank matrix. In this paper, we develop efficient algorithms with provable iteration complexity bounds for solving RPCP and SPCP. Numerical results on problems with millions of variables and constraints such as foreground extraction from surveillance video, shadow and specularity removal from face images and video denoising from heavily corrupted data show that our algorithms are competitive to current state-of-the-art solvers for RPCP and SPCP in terms of accuracy and speed. \end{abstract}

\begin{keywords} Principal Component Analysis, Compressed Sensing, Matrix Completion, Convex Optimization, Smoothing, Alternating Linearization Method, Alternating Direction Augmented Lagrangian Method, Accelerated Proximal Gradient Method, Iteration Complexity
\end{keywords}

%\begin{AMS} Primary, 65K05; Secondary, 68Q25, 90C25, 49M27 \end{AMS}

\section{Introduction}
Principal component analysis (PCA) plays an important role in applications arising from image and video processing, web data analysis and bioinformatics. PCA obtains a low-dimensional approximation to high-dimensional data in the $\ell_2$ sense, by computing the singular value decomposition (SVD) of a matrix. However, when the given data is corrupted by gross errors, classical PCA becomes impractical because the grossly corrupted observations can jeopardize the $\ell_2$ estimation. To overcome this shortcoming, a new model called robust PCA (RPCA) was considered by Wright \etal \cite{Wright-Ganesh-Rao-Peng-Ma-NIPS-2009}, Cand\`{e}s \etal \cite{Candes-Li-Ma-Wright-RPCA-2009} and Chandrasekaran \etal \cite{Chandrasekaran-Sanghavi-Parrilo-Willsky-2009} under the assumption that the gross errors are sparse. In this model it is assumed that the data matrix $D\in\reals^{m\times n}$ is of the form $D := L^0 + S^0$, where $L^0$ is a low-rank matrix, i.e., $\rank(L^0)\ll\min\{m,n\}$, and $S^0$ is a sparse matrix, i.e., $\norm{S^0}_0\ll mn$, where the so-called $\ell_0$-norm $\|S\|_0:=\|\vvec(S)\|_0$, $\vvec(S)$ is the vector obtained by stacking the columns of $S$ in order and $\norm{.}_0$ counts the number of nonzero elements of its argument.
%This model attempts to decompose the observed matrix $D\in\br^\mtn$ into the sum of two matrices: a low-rank approximation to $D$, and a sparse matrix that captures the gross errors.

To obtain a low rank and sparse decomposition of a given matrix $D$, RPCA combines both the $\rank$ function and the $\ell_0$-norm in the objective with some weighting parameter $\xi>0$ to balance the weights of rank and sparsity. This leads to the following formulation of RPCA:
\bea\label{prob:RPCA-rank-L0} \min_{L,S\in\br^\mtn} \{ \rank(L) + \xi\|S\|_0 : L + S = D \}. \eea
The RPCA problem is related to the matrix rank minimization \cite{Fazel-thesis-2002,Recht-Fazel-Parrilo-2007,Candes-Recht-2008,Candes-Tao-2009} problem, which itself is a generalization of the recovery problem in compressed sensing \cite{Donoho-06,Candes-Romberg-Tao-2006}. As in the compressed sensing and matrix rank minimization problems, \eqref{prob:RPCA-rank-L0} is NP-hard due to the combinatorial nature of the $\rank$ function and the $\ell_0$-norm.

Recently, it has been shown that under certain probabilistic conditions on $D:= L^0 + S^0$, %\eqref{prob:RPCA-rank-L0} is equivalent to a convex optimization problem
with very high probability, the low-rank matrix $L^0$ and the gross sparse ``error'' matrix $S^0$ can be recovered by solving the \emph{robust principal component pursuit} (RPCP) problem \citep{Candes-Li-Ma-Wright-RPCA-2009}:
\bea\label{prob:RPCA-Nuclear-L1} \min_{L,S\in\br^\mtn} \{ \|L\|_*+\xi\|S\|_1 : L+S=D \},\eea where $\xi=\frac{1}{\sqrt{\max\{m,n\}}}$, $\|L\|_*$ denotes the nuclear norm of $L$, which is defined as the sum of the singular values of $L$, and $\|S\|_1:=\|\vvec(S)\|_1$. Moreover, problem \eqref{prob:RPCA-Nuclear-L1} is also studied in \citep{Chandrasekaran-Sanghavi-Parrilo-Willsky-2009}, where a deterministic condition for exact recovery for is provided.
%is also studied in \citep{Chandrasekaran-Sanghavi-Parrilo-Willsky-2009} to decompose a given data matrix into low-rank and sparse components. %Moreover, with very high probability $(L^*,S^*)=(L^0,S^0)$, i.e. the recovery is exact.

In \cite{Zhou-Candes-Ma-StablePCP-2010}, it is shown that the recovery is still possible even when the data matrix, $D:=L^0 + S^0+N^0$, is corrupted with a dense error matrix, $N^0$ such that $\|N^0\|_F\leq\delta$. Indeed, any optimal solution $(L^*,S^*)$ to the \emph{stable principal component pursuit}~(SPCP) problem
\be\label{prob:SPCP} \qquad  \min_{L,S\in\br^{m\times n}}\{\|L\|_*+\xi\|S\|_1:\ \|L+S-D\|_F\leq\delta\}, \ee satisfies $\norm{L^*-L^0}_F^2+\norm{S^*-S^0}_F^2=\cO(\delta^2)$ with very high probability. Note that RPCP problem \eqref{prob:RPCA-Nuclear-L1} is a special case of SPCP problem \eqref{prob:SPCP} with $\delta=0$.

In some applications, some of the entries of $D$ in \eqref{prob:SPCP} may be missing. Let $\Omega\subset\{i:1\leq i\leq m\}\times\{j:1\leq j\leq n\}$ be the index set of the entries of $D$ that are observable and define the projection operator $\pi_{\Omega}:\reals^{m\times n}\rightarrow \reals^{m\times n}$ as $(\pi_\Omega(L))_{ij}=L_{ij}$, if $(i,j)\in\Omega$ and $(\pi_\Omega(L))_{ij}=0$ otherwise. Note that $\pi^*_{\Omega}(.)=\pi_{\Omega}(.)$, where $\pi^*_{\Omega}$ denotes the adjoint operator. In these applications with missing data, the problem
\be\label{prob:SPCP-missing} \qquad \min_{L,S\in\br^{m\times n}}\{\|L\|_*+\xi\|S\|_1:\ \|\pi_\Omega(L+S-D)\|_F\leq\delta\}, \ee
is solved to recover the low rank and sparse components of $D$.
It has been shown under some randomness hypotheses that the low rank $L^0$ and sparse $S^0$ can be recovered with high probability by solving \eqref{prob:SPCP-missing} when $\delta=0$ (see Theorem 1.2 in \citep{Candes-Li-Ma-Wright-RPCA-2009}).
%\bea\label{prob:RPCA-Nuclear-L1-MC}(\bar{L},\bar{Y}):= \arg\min_{L,Y} \{ \|L\|_* + \rho\|Y\|_1 : \PCal_\Omega(L+Y) = \PCal_\Omega(M)\}. \eea
In this paper, we will provide efficient methods to solve a problem equivalent to \eqref{prob:SPCP-missing}. In the following theorem, we state the alternative formulation and establish its equivalence to \eqref{prob:SPCP-missing}.
\begin{theorem}\label{thm:SPCP-missing-equiv}
$(L^*,\pi_\Omega(S^*))$ is an optimal solution to \eqref{prob:SPCP-missing} if $(L^*,S^*)$ is an optimal solution to
\bea\label{prob:SPCP-missing-equiv} \min_{L,S\in\br^{m\times n}}\{ \|L\|_* + \xi\|\pi_\Omega(S)\|_1 : \norm{L + S - \pi_\Omega(D)}_F\leq \delta\}. \eea
\end{theorem}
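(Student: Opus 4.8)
The plan is to prove the statement by a direct two–way comparison of feasible points and objective values of the two programs, exploiting three elementary facts about $\pi_\Omega$: it is idempotent ($\pi_\Omega\circ\pi_\Omega=\pi_\Omega$), it is an orthogonal projection and hence non-expansive in the Frobenius norm, and it induces the orthogonal splitting $\|A\|_F^2=\|\pi_\Omega(A)\|_F^2+\|(I-\pi_\Omega)(A)\|_F^2$ for every $A\in\br^{m\times n}$, since $\pi_\Omega(A)$ and $(I-\pi_\Omega)(A)$ have disjoint supports. Throughout, $I-\pi_\Omega$ denotes the projection onto the complementary (unobserved) index set $\Omega^c$, and one uses that $\pi_\Omega(D)$ vanishes on $\Omega^c$.

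First I would verify that $(L^*,\pi_\Omega(S^*))$ is feasible for \eqref{prob:SPCP-missing} whenever $(L^*,S^*)$ solves \eqref{prob:SPCP-missing-equiv}. By idempotence, $\pi_\Omega\bigl(L^*+\pi_\Omega(S^*)-D\bigr)=\pi_\Omega\bigl(L^*+S^*-\pi_\Omega(D)\bigr)$, and since $\pi_\Omega$ does not increase the Frobenius norm, $\|\pi_\Omega(L^*+\pi_\Omega(S^*)-D)\|_F\le\|L^*+S^*-\pi_\Omega(D)\|_F\le\delta$. Moreover the objective value of $(L^*,\pi_\Omega(S^*))$ in \eqref{prob:SPCP-missing} is exactly $\|L^*\|_*+\xi\|\pi_\Omega(S^*)\|_1$, which is the optimal value of \eqref{prob:SPCP-missing-equiv}; denote it $v^*$.

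Next I would show that no feasible point of \eqref{prob:SPCP-missing} has objective smaller than $v^*$. Take any $(L,S)$ with $\|\pi_\Omega(L+S-D)\|_F\le\delta$ and form the matrix $\tilde S:=\pi_\Omega(S)-(I-\pi_\Omega)(L)$, which agrees with $S$ on $\Omega$ and with $-L$ off $\Omega$. Then $\pi_\Omega(\tilde S)=\pi_\Omega(S)$, while $(I-\pi_\Omega)(L+\tilde S)=0$; using the orthogonal splitting and $\pi_\Omega(\pi_\Omega(D))=\pi_\Omega(D)$ one gets $\|L+\tilde S-\pi_\Omega(D)\|_F^2=\|\pi_\Omega(L+S-D)\|_F^2+\|(I-\pi_\Omega)(L+\tilde S)\|_F^2=\|\pi_\Omega(L+S-D)\|_F^2\le\delta^2$, so $(L,\tilde S)$ is feasible for \eqref{prob:SPCP-missing-equiv}. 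Its objective is $\|L\|_*+\xi\|\pi_\Omega(S)\|_1\le\|L\|_*+\xi\|S\|_1$, whence $v^*\le\|L\|_*+\xi\|S\|_1$. Combining this lower bound with the feasibility and objective computation of the preceding paragraph shows that $(L^*,\pi_\Omega(S^*))$ attains the minimum of \eqref{prob:SPCP-missing}.

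I do not expect any real obstacle here; the only thing requiring care is the bookkeeping in the support-splitting of the residual and the observation that $\pi_\Omega(D)$ is zero off $\Omega$, which is precisely what lets the off-$\Omega$ residual of the constructed point reduce to $(I-\pi_\Omega)(L+\tilde S)$ and be annihilated by the choice of $\tilde S$. It is worth noting that the argument simultaneously establishes that \eqref{prob:SPCP-missing} and \eqref{prob:SPCP-missing-equiv} share the same optimal value, and that it does not presuppose solvability of \eqref{prob:SPCP-missing}.
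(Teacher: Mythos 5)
Your argument is correct, and it hinges on the same key construction as the paper's proof: the matrix $\tilde S$ that agrees with $S$ on $\Omega$ and equals $-L$ on $\Omega^c$, which kills the unobserved part of the residual and turns any point feasible for \eqref{prob:SPCP-missing} into one feasible for \eqref{prob:SPCP-missing-equiv} with no larger objective. Where you differ is in the logical packaging. The paper argues by contradiction: it takes an optimal $(\bar L,\bar S)$ of \eqref{prob:SPCP-missing}, notes that $\bar S$ may be assumed supported on $\Omega$, assumes $(L^*,\pi_\Omega(S^*))$ is not optimal, and derives a contradiction via $\tilde S$. You instead give a direct two-sided comparison: $(L^*,\pi_\Omega(S^*))$ is feasible for \eqref{prob:SPCP-missing} with objective equal to the optimal value $v^*$ of \eqref{prob:SPCP-missing-equiv}, and every feasible point of \eqref{prob:SPCP-missing} has objective at least $v^*$. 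This buys you two things the paper's write-up leaves implicit: you explicitly verify feasibility of $(L^*,\pi_\Omega(S^*))$ for \eqref{prob:SPCP-missing} (the paper's step ``not optimal $\Rightarrow$ strictly larger objective than the optimum'' silently presumes this), and you never assume that \eqref{prob:SPCP-missing} attains its minimum, whereas the paper's contradiction argument starts from an assumed minimizer $(\bar L,\bar S)$. As a bonus, your argument shows the two problems share the same optimal value. So: same essential idea, but a cleaner and slightly more careful organization.
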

\begin{proof}
Suppose $(\bar{L},\bar{S})$ is an optimal solution to \eqref{prob:SPCP-missing}. We claim that $\bar{S}_{ij}=0,~\forall~(i,j)\notin\Omega$. Otherwise, $(\bar{L},\pi_\Omega(\bar{S}))$ is feasible to \eqref{prob:SPCP-missing} and has a strictly smaller objective function value than $(\bar{L},\bar{S})$, which contradicts the optimality of $(\bar{L},\bar{S})$. Thus, $\|\pi_\Omega(\bar{S})\|_1=\|\bar{S}\|_1$. Let $(L^*,S^*)$  be an optimal solution to \eqref{prob:SPCP-missing-equiv}. Now suppose that $(L^*,\pi_\Omega(S^*))$ is not optimal to \eqref{prob:SPCP-missing}; then we have \bea\label{proof:SPCP-missing-eq1}\|\bar{L}\|_*+\xi\|\bar{S}\|_1 = \|\bar{L}\|_*+\xi\|\pi_\Omega(\bar{S})\|_1 < \|L^*\|_* +\xi\|\pi_\Omega(S^*)\|_1. \eea By defining a new matrix $\tilde{S}$ as \beaa\tilde{S}_{ij}=\left\{\ba{ll} \bar{S}_{ij}, & (i,j)\in\Omega \\ -\bar{L}_{ij}, & (i,j)\notin\Omega, \ea\right.\eeaa we have that $(\bar{L},\tilde{S})$ is feasible to \eqref{prob:SPCP-missing-equiv} and $\|\pi_\Omega(\tilde{S})\|_1=\|\pi_\Omega(\bar{S})\|_1$. Combining this with \eqref{proof:SPCP-missing-eq1}, we obtain \beaa\|\bar{L}\|_*+\xi\|\pi_\Omega(\tilde{S})\|_1<\|L^*\|_*+\xi\|\pi_\Omega(S^*)\|_1,\eeaa which contradicts the optimality of $(L^*,S^*)$ to \eqref{prob:SPCP-missing-equiv}. Therefore, $(L^*,\pi_\Omega(S^*))$ is optimal to \eqref{prob:SPCP-missing}.
\end{proof}

Although \eqref{prob:SPCP-missing-equiv}
can be reformulated as a semi-definite programming~(SDP) problem and thus, in theory, can be efficiently solved by interior point methods, these solution techniques are impractical when the problem size is large. Recently, algorithms using only first-order information for solving RPCP problem \eqref{prob:RPCA-Nuclear-L1} and SPCP problem \eqref{prob:SPCP-missing} have been proposed. Algorithms for RPCP include the accelerated proximal gradient (APG) method by Lin \etal \citep{Ma09_1R} and alternating direction method of multipliers (ADMM) by Lin \etal \cite{Ma09_1J}, Cand\`{e}s \etal \cite{Candes-Li-Ma-Wright-RPCA-2009}
and Yuan and Yang \citep{Yuan-Yang-2009}. The APG method in \citep{Ma09_1R} is a variant of Nesterov's optimal gradient methods \citep{NesterovConvexBook2004,Nesterov-2005,Beck-Teboulle-2009}. In particular, the APG method  in \citep{Ma09_1R} is essentially the FISTA method in \cite{Beck-Teboulle-2009} applied to RPCP.
%As pointed out in \cite{Candes-Li-Ma-Wright-RPCA-2009}, the practical performance of APG depends strongly on the design of good continuation schemes. In contrast, ADMM type approaches have been shown to be efficient for solving \eqref{prob:RPCA-Nuclear-L1} and \eqref{prob:SPCP} (see \cite{Ma09_1J}, \cite{Candes-Li-Ma-Wright-RPCA-2009}, \citep{Yuan-Yang-2009} \cite{Tao09_1J} and \cite{Zhou-Candes-Ma-StablePCP-2010}).
Algorithms for SPCP problem include an augmented Lagrangian algorithm FALC~\cite{Ser10_1J} by Aybat and Iyengar; and two alternating direction augmented Lagrangian algorithms: ASALM~\cite{Tao09_1J} by Tao and Yuan and NSA~\cite{Ser11_1J} by Aybat and Iyengar. Indeed, in \cite{Ser11_1J,Tao09_1J}, it is shown that ASALM and NSA iterate sequences converge to an optimal solution of the SPCP problem. However, there is no iteration complexity result for both ASALM and NSA. On the other hand, FALC proposed in ~\cite{Ser10_1J} can be used to solve both RPCP and SPCP problems and computes an $\epsilon$-optimal solution within $\cO(1/\epsilon)$ SVD computations.

{\bf Our contribution.} This paper is dedicated to developing efficient algorithms for solving RPCP and SPCP problems given \eqref{prob:SPCP-missing-equiv} (for RPCP $\delta=0$). We propose several first-order methods and alternating direction type methods for solving \eqref{prob:SPCP-missing-equiv} and analyze their iteration complexity results. We show how our proposed methods can be applied to solve huge problems, involving millions of variables and linear constraints, arising from foreground extraction from surveillance video, shadow and specularity removal from face images and video denoising from heavily corrupted data. We report numerical results on these problems which show that our algorithms are competitive with current state-of-the-art solvers for RPCP and SPCP in terms of accuracy and speed.

{\bf Organization.} The rest of this paper is organized as follows. In section~\ref{sec:smooth}, we briefly describe the smoothing technique we used to smooth at least one of the non-smooth terms in the objective function of the RPCP and SPCP problems. Next, in Section \ref{sec:ADM-RPCA} we briefly review the exact and inexact alternating direction methods for solving RPCP \eqref{prob:RPCA-Nuclear-L1} proposed in \cite{Ma09_1J,Candes-Li-Ma-Wright-RPCA-2009}. In Section \ref{sec:ALM}, we propose our alternating linearization method~(ALM) for solving the RPCP problem \eqref{prob:SPCP-missing-equiv} with $\delta=0$ and present its iteration complexity bound. %Later, in section~\ref{sec:ALM-skipping} we show that how ALM can be used to solve the more general SPCP problem.
In section~\ref{sec:SPCP}, we show how the generic proximal gradient algorithms can be customized for solving SPCP problem given in \eqref{prob:SPCP-missing-equiv}. Specifically, we show that the subproblems that arise when applying the accelerated proximal gradient method FISTA proposed in %~\citep{NesterovConvexBook2004,Nesterov-2005,Beck-Teboulle-2009}
~\citep{Beck-Teboulle-2009} to the SPCP problem %either have closed-form solutions or
have solutions that can be obtained with very modest effort. This result enables us to provide a worst case computational complexity result for the SPCP problem. Numerical results on synthetic and real RPCP and SPCP problems are reported in sections \ref{sec:numerics-RPCP} and \ref{sec:numerics-SPCP}, respectively. %Proofs of our iteration complexity results are given in an appendix.
\section{Smoothing Technique}
\label{sec:smooth}
Note that the objective function of \eqref{prob:SPCP-missing-equiv} is the sum of two non-smooth functions $f(L):=\|L\|_*$ and $g(S):=\xi\|\pi_\Omega(S)\|_1$. The algorithms with provable iteration complexity bounds introduced in this paper require that one or both of the functions $f(L)$ and $g(S)$ are smooth with Lipschitz continuous gradients. Here we adopt Nesterov's smoothing technique \cite{Nesterov-2005} %because this technique
to guarantee that the gradient of the smoothed function is Lipschitz continuous.
For fixed parameters $\mu>0$ and $\nu>0$, we define the smooth $C^{1,1}$ functions $f_\mu(.)$ and $g_\nu(.)$ as follows

\begin{align}
&f_\mu(L):=\max_{W\in\reals^{m\times n}:~\norm{W}\leq 1}\left\langle L,W\right\rangle-\frac{\mu}{2}\norm{W}_F^2, \label{eq:smooth_f}\\
&g_\nu(S):=\max_{Z\in\reals^{m\times n}:~\norm{Z}_\infty\leq \xi}\left\langle \pi_\Omega(S),Z\right\rangle-\frac{\nu}{2}\norm{Z}_F^2, \label{eq:smooth_g}
\end{align}
where $\norm{W}$ denotes the spectral norm of $W\in\reals^\mtn$ and $\norm{Z}_\infty:=\norm{\vvec(Z)}_\infty$. We denote the optimal solutions of \eqref{eq:smooth_f} and \eqref{eq:smooth_g} by $W_\mu(L)$ and $Z_\nu(S)$, respectively. Using the rotational invariance of \eqref{eq:smooth_f}, we reduce it to a vector problem: \[\min_{\sigma\in\reals^{\min\{m,n\}}}\{\norm{\sigma-\bar{\sigma}/\mu}_2: \|\sigma\|_\infty\leq 1\},\]
where $\bar{\sigma}$ is a vector whose elements are the singular values of the matrix $L$. This problem has a closed form solution $\sigma^*$ such that $\sigma^*_i=\min\left\{\frac{\bar{\sigma}_i}{\mu},1\right\}$ for $i=1,\ldots,\min\{m,n\}$. Thus \bea\label{def:W-sigma(L)}
W_\mu(L)=U~\Diag\left(\min\left\{\frac{\bar{\sigma}}{\mu},\mathbf{1}\right\}\right)V^\top\eea
gives the solution to \eqref{eq:smooth_f}, where $L=U~\Diag(\bar{\sigma})V^\top$ is the singular value decomposition of $L$ and $\mathbf{1}$ denotes the vector of ones. Similarly, we can show that the solution to \eqref{eq:smooth_g} is \bea\label{def:Z-sigma(Y)}[Z_\nu(S)]_{ij}=\left\{
                                                                   \begin{array}{ll}
                                                                     %\min\{\xi,\max\{S_{ij}/\nu,-\xi\}\}
                                                                     \sgn(S_{ij})\min\{|S_{ij}|/\nu,\ \xi\} , & \forall~(i,j)\in\Omega; \\
                                                                     0, & \forall~(i,j)\notin\Omega.
                                                                   \end{array}
                                                                 \right.
\eea
According to Theorem 1 in \citep{Nesterov-2005}, the gradient of $f_\mu$ is given by $\nabla f_\mu(L)=W_\mu(L)$ and is Lipschitz continuous with Lipschitz constant %$L(f_\mu)=1/\mu$
$1/\mu$; and the gradient of $g_\nu$ is given by $\nabla g_\nu(S)=\pi^*_\Omega(Z_\nu(S))=Z_\nu(S)$ and is Lipschitz continuous with Lipschitz constant $1/\nu$.%$L(g_\nu)=1/\nu$.

In the following sections, we will introduce algorithms with provable iteration complexity bounds that solve problems approximating \eqref{prob:SPCP-missing-equiv} with a smooth objective function
\be\label{prob:SPCP-smooth} \min_{L,S\in\reals^\mtn}\{f_\mu(L)+g_\nu(S):(L,S)\in\chi\}, \ee
and with a partially smooth objective function
\be\label{prob:SPCP-partial-smooth} \min_{L,S\in\reals^\mtn}\{f_\mu(L)+g(S):(L,S)\in\chi\}, \ee
where $\chi:=\{(L,S)\in\reals^{m\times n}\times\reals^{m\times n}:~\norm{L+S-\pi_\Omega(D)}_F\leq\delta\}$.

The inexact solutions of \eqref{prob:SPCP-smooth} and \eqref{prob:SPCP-partial-smooth} are closely related to solution of \eqref{prob:SPCP-missing-equiv}. In fact, let $\tau:=\half\min\{m,n\}$. Then we have $\max\{\half\|W\|_F^2:\|W\|\leq 1\}=\tau$, and $\max\{\half\|Z\|_F^2:\|Z\|_\infty\leq\xi\}=\half mn\xi^2=\tau$, where the second equality follows from $\xi=\frac{1}{\sqrt{\max\{m,n\}}}$. Hence, from \eqref{eq:smooth_f} and \eqref{eq:smooth_g}, we have
\begin{align}
\label{f-sigma-f-bound} f_\mu(L)\leq f(L)\leq f_\mu(L)+\mu \tau, \quad \forall L\in\br^\mtn,\\
\label{g-sigma-g-bound} g_\nu(S)\leq g(S)\leq g_\nu(S)+\nu \tau, \quad \forall S\in\br^\mtn.
\end{align}
\newpage
Therefore, we have the following fact about an $\epsilon$-optimal solution to problem \eqref{prob:SPCP-missing-equiv} ($\hat{x}\in\mathcal{C}$ is called an $\epsilon$-optimal solution of $h^*:=\min_x\{h(x):x\in\mathcal{C}\}$ if $h(\hat{x})-h^*\leq\epsilon$ holds).

\begin{theorem}\label{the:RPCA-Nuclear-L1-epsilon-optimal}
Let $(L^*,S^*)$ be an optimal solution to problem \eqref{prob:SPCP-missing-equiv}. Given $\epsilon>0$, let $(L^*(\mu),S^*(\nu))$ denote an optimal solution to the smoothed problem \eqref{prob:SPCP-smooth} with $\mu=\nu=\frac{\epsilon}{4\tau}$. If $(L(\mu),S(\nu))$ is an $\epsilon/2$-optimal solution to \eqref{prob:SPCP-smooth}, then $(L(\mu),S(\nu))$ is an $\epsilon$-optimal solution to \eqref{prob:SPCP-missing-equiv}.
\end{theorem}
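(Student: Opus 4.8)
The plan is a direct ``sandwich'' argument exploiting that \eqref{prob:SPCP-missing-equiv} and \eqref{prob:SPCP-smooth} are minimized over the \emph{same} feasible set $\chi$, together with the uniform bounds \eqref{f-sigma-f-bound}--\eqref{g-sigma-g-bound}. Write $F(L,S):=f(L)+g(S)$ for the objective of \eqref{prob:SPCP-missing-equiv} and $F_{\mu,\nu}(L,S):=f_\mu(L)+g_\nu(S)$ for the objective of \eqref{prob:SPCP-smooth}, and let $F^*:=F(L^*,S^*)$ and $F_{\mu,\nu}^*:=F_{\mu,\nu}(L^*(\mu),S^*(\nu))$ denote the respective optimal values. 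Adding \eqref{f-sigma-f-bound} and \eqref{g-sigma-g-bound} gives, for all $(L,S)\in\br^\mtn\times\br^\mtn$,
\[
F_{\mu,\nu}(L,S)\ \le\ F(L,S)\ \le\ F_{\mu,\nu}(L,S)+(\mu+\nu)\tau.
\]

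Next I would bound the smoothed optimal value by the original one: since $(L^*,S^*)\in\chi$ is feasible for \eqref{prob:SPCP-smooth}, optimality of $(L^*(\mu),S^*(\nu))$ and the left inequality above yield $F_{\mu,\nu}^*\le F_{\mu,\nu}(L^*,S^*)\le F(L^*,S^*)=F^*$. Now I would chain the estimates at the candidate point $(L(\mu),S(\nu))$, which is feasible for \eqref{prob:SPCP-missing-equiv} because $(L(\mu),S(\nu))\in\chi$: the right inequality above, the $\epsilon/2$-optimality hypothesis $F_{\mu,\nu}(L(\mu),S(\nu))\le F_{\mu,\nu}^*+\epsilon/2$, and the bound $F_{\mu,\nu}^*\le F^*$ give
\[
F(L(\mu),S(\nu))\ \le\ F_{\mu,\nu}(L(\mu),S(\nu))+(\mu+\nu)\tau\ \le\ F^*+\frac{\epsilon}{2}+(\mu+\nu)\tau.
\]
Substituting $\mu=\nu=\frac{\epsilon}{4\tau}$ makes $(\mu+\nu)\tau=\frac{\epsilon}{2}$, so the right-hand side equals $F^*+\epsilon$; this is precisely the assertion that $(L(\mu),S(\nu))$ is $\epsilon$-optimal for \eqref{prob:SPCP-missing-equiv}.

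There is essentially no obstacle in this argument; the only points that require attention are (i) verifying that \eqref{prob:SPCP-missing-equiv} and \eqref{prob:SPCP-smooth} have identical constraint sets, so that a minimizer of either problem is admissible in the other (immediate from their definitions, both being ``$(L,S)\in\chi$''), and (ii) bookkeeping the two sources of error, namely the smoothing gap $(\mu+\nu)\tau$ and the suboptimality $\epsilon/2$, so that they sum to exactly $\epsilon$ for the prescribed choice of $\mu$ and $\nu$.
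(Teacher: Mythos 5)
Your proposal is correct and follows essentially the same route as the paper's proof: both use the sandwich bounds \eqref{f-sigma-f-bound}--\eqref{g-sigma-g-bound}, the fact that $(L^*,S^*)$ is feasible for the smoothed problem (so its smoothed optimal value is at most $F^*$), and the $\epsilon/2$-suboptimality hypothesis, with $(\mu+\nu)\tau=\epsilon/2$ closing the gap. The only difference is cosmetic bookkeeping—you chain the inequalities starting from $F(L(\mu),S(\nu))$ while the paper starts from the difference $F(L(\mu),S(\nu))-F(L^*,S^*)$.
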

\begin{proof} Using the inequalities in \eqref{f-sigma-f-bound} and \eqref{g-sigma-g-bound}, we have
\begin{eqnarray*}\begin{array}{lll} f(L(\mu))+g(S(\nu))-f(L^*)-g(S^*) & \leq & f_\mu(L(\mu))+g_\nu(S(\nu))+(\mu+\nu)\tau-f_\mu(L^*)-g_\nu(S^*) \\ & \leq & f_\mu(L(\mu))+g_\nu(S(\nu))+(\mu+\nu)\tau-f_\mu(L^*(\mu))-g_\nu(S^*(\nu)) \\ & \leq & \epsilon/2 + (\mu+\nu)\tau = \epsilon/2 + \epsilon/2  =  \epsilon, \end{array}\end{eqnarray*} where the third inequality is due to the fact that $(L(\mu),S(\nu))$ is an $\epsilon/2$-optimal solution to \eqref{prob:SPCP-smooth} and the following equality is due to $\mu=\nu=\frac{\epsilon}{4\tau}$.
\end{proof}

Similarly, with $\mu=\frac{\epsilon}{2\tau}$, an $\epsilon/2$-optimal solution to \eqref{prob:SPCP-partial-smooth} is an $\epsilon$-optimal solution to \eqref{prob:SPCP-missing-equiv}.

In the following sections, we introduce algorithms that find an $\epsilon$-optimal solution to either \eqref{prob:SPCP-smooth} or \eqref{prob:SPCP-partial-smooth} with provable iteration complexity bounds.
\section{Alternating Direction Methods for RPCP}\label{sec:ADM-RPCA}
The alternating direction methods (ADM) in \cite{Ma09_1J,Candes-Li-Ma-Wright-RPCA-2009,Yuan-Yang-2009} are based on an augmented Lagrangian framework. Note that given a penalty parameter $\rho>0$, the augmented Lagrangian function associated with problem \eqref{prob:RPCA-Nuclear-L1} is
\bea\label{RPCA:Lagrangian-function}\LCal_\rho(L,S;\Lambda):=\|L\|_*+\xi\|S\|_1-\langle \Lambda, L+S-D \rangle + \frac{1}{2\rho}\|L+S-D\|_F^2,\eea where $\Lambda$ is a matrix of Lagrange multipliers. Note that the penalty parameter $\rho$ can be adjusted dynamically, and this yields the $k$-th iteration of the augmented Lagrangian method as follows:

\bea\label{RPCA:augmented-Lag-XY}\left\{\ba{rcl}(L_{k+1},S_{k+1})&:=&\argmin_{L,S} \LCal_{\rho_k}(L,S;\Lambda_k)\\ \Lambda_{k+1}&:=&\Lambda_k-(L_{k+1}+S_{k+1}-D)/\rho_k,\\
\rho_{k+1} &:=& \eta\rho_k,\ea\right.\eea
where $\eta\in(0,1]$.

The Exact ADM (EADM) in \cite{Ma09_1J} is based on \eqref{RPCA:augmented-Lag-XY}. However, minimizing \eqref{RPCA:Lagrangian-function} with respect to $L$ and $S$ simultaneously is not easy. In fact, it is as hard as the original problem \eqref{prob:RPCA-Nuclear-L1}. On the other hand, it is easy to minimize $\LCal_\rho(L,S;\Lambda)$ with respect to $L$ or $S$ while keeping the other matrix fixed and each minimization has a closed form solution which is easy to compute. Thus, EADM computes $(L_{k+1},S_{k+1})$ by alternatingly minimizing $\LCal_{\rho_k}(L,S;\Lambda_k)$ repeatedly in $L$ and in $S$, while fixing the other, until the stopping criterion for the inner loop is met, i.e.,
\[\left\{\ba{rcl} L_{k,j+1}&:=&\argmin_L\LCal_{\rho_k}(L,S_{k,j};\Lambda_k), \\
S_{k,j+1}&:=&\argmin_S\LCal_{\rho_k}(L_{k,j+1},S;\Lambda_k), \ea\right.\]
loop is repeated until $\max\{\norm{L_{k,j+1}-L_{k,j}}_F,~\norm{S_{k,j+1}-S_{k,j}}_F\}\leq 10^{-6}~\norm{D}_F$ holds; at that point $(L_{k+1},S_{k+1})$ is set to $(L_{k,j+1}, S_{k,j+1})$. Next, $\Lambda_k$ and $\rho_k$ are updated:
\[\left\{\ba{rcl}\Lambda_{k+1}&:=&\Lambda_k-(L_{k+1}+S_{k+1}-D)/\rho_k,\\
\rho_{k+1} &:=& \eta\rho_k.\ea\right.\]
As a result, the iterate $(L_{k+1},S_{k+1})$ in EADM only approximately minimizes $\LCal_{\rho_k}(L,S;\Lambda_k)$.

Updating the matrix of Lagrangian multipliers $\Lambda$ at every iteration after minimizing $\LCal_{\rho_k}(L,S;\Lambda_k)$ first in $L$ and then in $S$ leads to the following alternating direction method of multipliers~(ADMM). In the $k$-th iteration of ADMM, one computes,
\bea\label{RPCA:ADAL}\left\{\ba{lll}L_{k+1}&:=&\argmin_L\LCal_{\rho_k}(L,S_k;\Lambda_k), \\
                                    S_{k+1}&:=&\argmin_S\LCal_{\rho_k}(L_{k+1},S;\Lambda_k), \\
                                    \Lambda_{k+1}&:=&\Lambda_k-(L_{k+1}+S_{k+1}-D)/\rho_k,\\
                                    \rho_{k+1} &:=& \eta\rho_k,\ea\right.\eea
where $\eta\in(0,1]$. The Inexact ADM (IADM) in \cite{Ma09_1J,Candes-Li-Ma-Wright-RPCA-2009,Yuan-Yang-2009} executes \eqref{RPCA:ADAL} as given.

%{\color{blue} I think we can remove this sentence. Specifically, in EADM, the Lagrange multiplier matrix and the penalty parameter are updated only after the augmented Lagrangian function is minimized in $L$ and $S$ simultaneously, while in IADM, the Lagrange multiplier matrix and the penalty parameter are updated after each alternating minimization pass is performed on the Lagrangian function.}
ADM algorithms can be very efficient since the two minimization subproblems in \eqref{RPCA:ADAL} are easy to solve. Note that the generic form of the subproblem corresponding to $L$ is %\beaa
$\min_L\LCal_{\rho}(L,\tilde{S};\tilde{\Lambda})$, %\eeaa
for some given $\rho$, $\tilde{S}$ and $\tilde{\Lambda}$, which can be reduced to \bea\label{prob:shrinkage-nuclear} \min_L \rho\|L\|_*+\half\|L+\tilde{S}-D-\rho\tilde{\Lambda}\|_F^2.\eea \eqref{prob:shrinkage-nuclear} has a closed-form optimal solution which is given by the matrix shrinkage operator (see, e.g., \cite{Cai-Candes-Shen-2008,Ma-Goldfarb-Chen-2008}) $U~\Diag\left((\sigma-\rho)_+\right)V^\top$, where $U~\Diag(\sigma)V^\top$ is the singular value decomposition of the matrix $(D+\rho\tilde{\Lambda}-\tilde{S})$ and $(.)_+$ is a componentwise operator such that $(a)_+:=\max\{a,0\}$ for all $a\in\reals$.

The generic form of the subproblem corresponding to $S$ is %\beaa
$\min_S\LCal_{\rho}(\tilde{L},Y;\tilde{\Lambda})$, %\eeaa
for some given $\rho$, $\tilde{L}$ and $\tilde{\Lambda}$, which can be reduced to \bea\label{prob:shrinkage-L1} \min_S \rho\xi\|S\|_1+\half\|\tilde{L}+S-D-\rho\tilde{\Lambda}\|_F^2.\eea \eqref{prob:shrinkage-L1} has a closed-form optimal solution, which is given by the vector shrinkage operator (see, e.g., \cite{Daubechies-Defrise-DeMol-04,Hale-Yin-Zhang-SIAM-2008}) $\sgn(D+\rho\tilde{\Lambda}-\tilde{L})\odot\left(|D+\rho\tilde{\Lambda}-\tilde{L}|-\rho\xi\right)_+$, where $\odot$ denotes the componentwise multiplication.

The following convergence result is proved in \cite{Ma09_1J} for EADM, %which is given in
i.e., for \eqref{RPCA:augmented-Lag-XY}. On the other hand, no iteration complexity results have been given for EADM when the minimization step in \eqref{RPCA:augmented-Lag-XY} is carried out inexactly, and in practical implementations of EADM, one has to adopt inexact minimizations.

\begin{theorem}\label{the:Exact-ADM-RPCA}{(Theorem 1 in \cite{Ma09_1J})}
Let $\{(L_k,S_k)\}_{k\in\integers_+}$ be the sequence of iterates produced by EADM. Then any accumulation point $(L^*,S^*)$ of $\{(L_k,S_k)\}_{k\in\integers_+}$ is an optimal solution to the RPCP problem~\eqref{prob:RPCA-Nuclear-L1} and the convergence rate is at least $O(\rho_k)$ in the sense that \beaa \left|\|L_k\|_*+\xi\|S_k\|_1- \|L^*\|_* -\xi\|S^*\|_1\right| = O(\rho_{k-1}).\eeaa
\end{theorem}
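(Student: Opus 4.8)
The plan is to use the classical observation that EADM is the proximal point algorithm applied to the dual of \eqref{prob:RPCA-Nuclear-L1}, and to read off the stated rate directly from the first-order optimality conditions of the augmented Lagrangian subproblems. First I would write the optimality conditions for the joint minimizer $(L_{k+1},S_{k+1})=\argmin_{L,S}\LCal_{\rho_k}(L,S;\Lambda_k)$. Because $L$ and $S$ are coupled only through the smooth quadratic $\frac{1}{2\rho_k}\norm{L+S-D}_F^2$ while the nonsmooth terms $\norm{L}_*$ and $\xi\norm{S}_1$ are separable, the subdifferential of $\LCal_{\rho_k}$ splits, and substituting the multiplier update $\Lambda_{k+1}=\Lambda_k-(L_{k+1}+S_{k+1}-D)/\rho_k$ the conditions become $\Lambda_{k+1}\in\partial\norm{L_{k+1}}_*$ and $\Lambda_{k+1}\in\partial\big(\xi\norm{\cdot}_1\big)(S_{k+1})$. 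Using $\partial\norm{L}_*=\{W:\fprod{W}{L}=\norm{L}_*,\ \norm{W}\le1\}$ and $\partial\big(\xi\norm{\cdot}_1\big)(S)=\{Z:\fprod{Z}{S}=\xi\norm{S}_1,\ \norm{Z}_\infty\le\xi\}$, I get two facts: (i) every iterate obeys $\norm{\Lambda_{k+1}}\le1$ and $\norm{\Lambda_{k+1}}_\infty\le\xi$, hence $\norm{\Lambda_k}_F\le C:=\sqrt{\min\{m,n\}}$ for all $k\ge1$, so $\{\Lambda_k\}$ is bounded; and (ii) Euler's identity for the positively homogeneous convex functions $\norm{\cdot}_*$ and $\xi\norm{\cdot}_1$ gives $\norm{L_{k+1}}_*=\fprod{\Lambda_{k+1}}{L_{k+1}}$ and $\xi\norm{S_{k+1}}_1=\fprod{\Lambda_{k+1}}{S_{k+1}}$, so with $r_k:=L_k+S_k-D$, $\phi_k:=\norm{L_k}_*+\xi\norm{S_k}_1$, and $p^*:=\norm{L^*}_*+\xi\norm{S^*}_1$, we have $\phi_k=\fprod{\Lambda_k}{D}+\fprod{\Lambda_k}{r_k}$ for $k\ge1$.

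Next I would compute the Lagrangian dual of \eqref{prob:RPCA-Nuclear-L1}. Using the conjugates of the nuclear and $\ell_1$ norms, $q(\Lambda):=\min_{L,S}\{\norm{L}_*+\xi\norm{S}_1-\fprod{\Lambda}{L+S-D}\}$ equals $\fprod{\Lambda}{D}$ on the compact convex set $\mathcal{D}:=\{\Lambda:\norm{\Lambda}\le1,\ \norm{\Lambda}_\infty\le\xi\}$ and $-\infty$ elsewhere; since the only constraint of \eqref{prob:RPCA-Nuclear-L1} is affine, the objective is finite everywhere, and $\mathcal{D}$ is compact, strong duality holds and the dual optimum is attained, so $d^*:=\max_{\Lambda\in\mathcal{D}}q(\Lambda)=p^*$ with $q(\Lambda^*)=d^*$ for some $\Lambda^*\in\mathcal{D}$. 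The inclusions in (i) say precisely that $(L_{k+1},S_{k+1})$ attains $q(\Lambda_{k+1})$, hence $-r_{k+1}=\rho_k(\Lambda_{k+1}-\Lambda_k)$ is a supergradient of the concave $q$ at $\Lambda_{k+1}$; equivalently, $\{\Lambda_k\}$ is exactly the proximal point iteration for $\max_\Lambda q(\Lambda)$. From this I extract the two bounds the rate needs: weak duality gives $q(\Lambda_k)\le d^*=p^*$, while evaluating the supergradient inequality $q(\Lambda)\le q(\Lambda_k)+\rho_{k-1}\fprod{\Lambda_k-\Lambda_{k-1}}{\Lambda-\Lambda_k}$ at $\Lambda=\Lambda^*$ and using $\norm{\Lambda_k-\Lambda_{k-1}}_F\le2C$, $\norm{\Lambda^*-\Lambda_k}_F\le2C$ gives $d^*-q(\Lambda_k)\le4C^2\rho_{k-1}$. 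Since also $|\fprod{\Lambda_k}{r_k}|\le\norm{\Lambda_k}_F\norm{r_k}_F\le2C^2\rho_{k-1}$ (from $r_k=\rho_{k-1}(\Lambda_{k-1}-\Lambda_k)$), the identity $\phi_k=q(\Lambda_k)+\fprod{\Lambda_k}{r_k}$ yields $|\phi_k-p^*|\le6C^2\rho_{k-1}=\cO(\rho_{k-1})$, which is the stated rate.

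For the first assertion I would argue as follows. Being a proximal point iteration, $\{\Lambda_k\}$ is Fej\'er monotone with respect to the (nonempty, compact) set of dual optima, so $\sum_k\norm{\Lambda_{k+1}-\Lambda_k}_F^2<\infty$ and in particular $\norm{\Lambda_{k+1}-\Lambda_k}_F\to0$; combined with $\rho_k\le\rho_0$ this gives $\norm{r_{k+1}}_F=\rho_k\norm{\Lambda_{k+1}-\Lambda_k}_F\to0$. Let $(L^*,S^*)$ be an accumulation point of $\{(L_k,S_k)\}$ along $k\in\mathcal{K}$. Then $L^*+S^*-D=\lim_{\mathcal{K}}r_k=0$, so $(L^*,S^*)$ is feasible for \eqref{prob:RPCA-Nuclear-L1}. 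Moreover $\phi_k\to p^*$: when $\eta<1$ this is immediate from the rate bound since $\rho_{k-1}\to0$, and when $\eta=1$ it follows because every cluster point of the bounded sequence $\{\Lambda_k\}$ is dual optimal (using $\norm{\Lambda_{k+1}-\Lambda_k}_F\to0$, $r_{k+1}\to0$, and the supergradient inclusion), so Fej\'er monotonicity forces $\Lambda_k$ to converge to a dual optimum, whence $q(\Lambda_k)\to d^*$ and $\fprod{\Lambda_k}{r_k}\to0$. Continuity of the norms then gives $\norm{L^*}_*+\xi\norm{S^*}_1=\lim_{\mathcal{K}}\phi_k=p^*$, so $(L^*,S^*)$, being feasible with optimal value, solves \eqref{prob:RPCA-Nuclear-L1}.

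I expect the delicate points to be mostly careful bookkeeping: (a) checking that the joint subproblem really decouples into the two subgradient inclusions and that the common subgradient is $\Lambda_{k+1}$, not $\Lambda_k$; (b) the slightly unusual step of reading the lower bound $q(\Lambda_k)\ge p^*-\cO(\rho_{k-1})$ off a single supergradient inequality rather than from a convergence-rate theorem for the proximal point method; and (c) the constant-penalty case $\eta=1$, where the rate statement degenerates to a fixed bound $\cO(\rho_0)$ and the optimality of accumulation points relies on the classical convergence of the proximal point iteration rather than on the rate estimate.
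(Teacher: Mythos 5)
The paper does not prove this statement at all: it is imported verbatim as Theorem~1 of \cite{Ma09_1J}, so there is no in-paper argument to compare against. Your proof is, as far as I can check, correct and self-contained, and it follows the classical Rockafellar route of reading EADM as the proximal point method on the dual $\max\{\fprod{\Lambda}{D}:\norm{\Lambda}\le 1,\ \norm{\Lambda}_\infty\le\xi\}$ --- which is also the spirit of the argument in the cited reference (boundedness of $\Lambda_k$ from the subgradient inclusions, the identity $\norm{L_k}_*+\xi\norm{S_k}_1=\fprod{\Lambda_k}{L_k+S_k}$, and a duality-gap estimate). The key steps all hold: the joint subproblem does decouple so that $\Lambda_{k+1}\in\partial\norm{L_{k+1}}_*\cap\partial(\xi\norm{\cdot}_1)(S_{k+1})$; this yields $\norm{\Lambda_k}_F\le\sqrt{\min\{m,n\}}$, the Euler identity $\phi_k=q(\Lambda_k)+\fprod{\Lambda_k}{r_k}$, and the supergradient relation $-r_{k+1}=\rho_k(\Lambda_{k+1}-\Lambda_k)\in\partial q(\Lambda_{k+1})$; the single supergradient inequality at a dual optimum then gives $d^*-q(\Lambda_k)\le 4C^2\rho_{k-1}$ and hence $|\phi_k-p^*|\le 6C^2\rho_{k-1}$, which is exactly the claimed rate once accumulation points are shown optimal. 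Your handling of the two regimes ($\eta<1$ via $\rho_k\to 0$, $\eta=1$ via Fej\'er monotonicity and closedness of the subdifferential graph) is also sound, and you correctly flag that the theorem presumes exact joint minimization --- a caveat the paper itself emphasizes immediately after the statement. Two cosmetic points worth making explicit in a written-up version: the bounds $\norm{\Lambda_k-\Lambda_{k-1}}_F\le 2C$ only hold for $k\ge 2$ (the $k=1$ term involves the arbitrary $\Lambda_0$ and is absorbed into the $O(\cdot)$ constant), and existence of the joint minimizer should be noted via coercivity of $\LCal_{\rho_k}(\cdot,\cdot;\Lambda_k)$, which follows since $\norm{L}_*+\xi\norm{S}_1$ grows along any direction on which the quadratic coupling term is flat.
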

From this result, it looks like one can obtain any rate result via choosing $\{\rho_k\}_{k\in\integers_+}$ sequence accordingly. However, it is important to note that Theorem~\ref{the:Exact-ADM-RPCA} requires that the optimization problem in \eqref{RPCA:augmented-Lag-XY} be solved exactly; and solving this subproblem for a small value of $\rho_k$ is almost as hard as solving the original RPCP problem~\eqref{prob:RPCA-Nuclear-L1}.

Iteration complexity of IADM, i.e., \eqref{RPCA:ADAL}, is not known. On the other hand, the following convergence result is proved in \cite{Ma09_1J} for IADM.
\begin{theorem}\label{the:Inexact-ADM-RPCA}{(Theorem 2 in \cite{Ma09_1J})}
Let $\{(L_k,S_k)\}_{k\in\integers_+}$ be the sequence of iterates produced by IADM. If $\{\rho_k\}_{k\in\integers_+}$ is nonincreasing and $\sum_{k=1}^{+\infty}\rho_k=+\infty$, then $(L_k,S_k)$ converges to an optimal solution of the RPCP problem~\eqref{prob:RPCA-Nuclear-L1}.
\end{theorem}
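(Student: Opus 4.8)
The plan is to run the standard monotone-operator/KKT analysis of ADMM, with extra care for the varying penalty $\rho_k$. First I would record optimality conditions. Problem \eqref{prob:RPCA-Nuclear-L1} is convex with a feasible affine constraint, so strong duality holds and at any optimal $(L^*,S^*)$ there is a multiplier $\Lambda^*$ with $\Lambda^*\in\partial\|L^*\|_*$, $\Lambda^*\in\xi\,\partial\|S^*\|_1$ and $L^*+S^*=D$. From the two minimizations in \eqref{RPCA:ADAL}, writing out first-order optimality and using $\Lambda_{k+1}=\Lambda_k-(L_{k+1}+S_{k+1}-D)/\rho_k$, one obtains
\[
\widetilde\Lambda_{k+1}:=\Lambda_{k+1}+\tfrac1{\rho_k}(S_{k+1}-S_k)\in\partial\|L_{k+1}\|_*,\qquad \Lambda_{k+1}\in\xi\,\partial\|S_{k+1}\|_1,
\]
together with the residual identity $L_{k+1}+S_{k+1}-D=\rho_k(\Lambda_k-\Lambda_{k+1})$.

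The second ingredient is a set of cheap a priori bounds: every subgradient of $\|\cdot\|_*$ has spectral norm at most $1$ and every subgradient of $\xi\|\cdot\|_1$ has $\ell_\infty$-norm at most $\xi$, so $\{\Lambda_k\}$ and $\{\widetilde\Lambda_k\}$ are bounded by constants depending only on $m,n,\xi$. Consequently $\tfrac1{\rho_k}\|S_{k+1}-S_k\|_F=\|\widetilde\Lambda_{k+1}-\Lambda_{k+1}\|_F$ is bounded and $\|L_{k+1}+S_{k+1}-D\|_F=\rho_k\|\Lambda_k-\Lambda_{k+1}\|_F=\cO(\rho_k)$.

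Next comes the core estimate. I would use monotonicity of $\partial\|\cdot\|_*$ evaluated at $(L_{k+1},L^*)$, of $\xi\,\partial\|\cdot\|_1$ at $(S_{k+1},S^*)$, and of both subdifferentials between consecutive iterates $(L_{k+1},S_{k+1})$ and $(L_k,S_k)$; combining the resulting inner-product inequalities with the residual identity and the three-point identity $2\langle a-b,c-a\rangle=\|b-c\|_F^2-\|a-b\|_F^2-\|a-c\|_F^2$ produces a quasi-Fej\'er recursion
\[
\Phi_{k+1}\le\Phi_k-c\,\big(\rho_k\|\Lambda_{k+1}-\Lambda_k\|_F^2+\tfrac1{\rho_k}\|S_{k+1}-S_k\|_F^2\big)+\varepsilon_k,
\]
for a suitable penalty-weighted energy $\Phi_k$ measuring the distance of $(\Lambda_k,S_k)$ to the KKT point, where $\varepsilon_k\ge0$ accounts for the change of the weights and $\sum_k\varepsilon_k<\infty$ because $\{\rho_k\}$ is nonincreasing (this, with the bound on $\tfrac1{\rho_k}\|S_{k+1}-S_k\|_F$, also keeps $\{S_k\}$ and hence $\{L_k=D-S_k+\rho_{k-1}(\Lambda_{k-1}-\Lambda_k)\}$ bounded). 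From the recursion, $\{\Phi_k\}$ converges, all iterate sequences are bounded, $\sum_k\rho_k\|\Lambda_{k+1}-\Lambda_k\|_F^2<\infty$ and $\sum_k\tfrac1{\rho_k}\|S_{k+1}-S_k\|_F^2<\infty$; the latter two give $\Lambda_{k+1}-\Lambda_k\to0$, $S_{k+1}-S_k\to0$ and $\|L_{k+1}+S_{k+1}-D\|_F\to0$.

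Finally I would pass to the limit. Since $\sum_k\rho_k\|\widetilde\Lambda_{k+1}-\Lambda_{k+1}\|_F^2<\infty$ and $\sum_k\rho_k=+\infty$ (this is where the second hypothesis enters), we get $\liminf_k\|\widetilde\Lambda_{k+1}-\Lambda_{k+1}\|_F=0$, i.e.\ $\tfrac1{\rho_k}(S_{k+1}-S_k)\to0$ along some subsequence; refining it so that $(L_k,S_k,\Lambda_k)\to(\bar L,\bar S,\bar\Lambda)$ and invoking closedness of the graphs of $\partial\|\cdot\|_*$ and $\xi\,\partial\|\cdot\|_1$ together with $\bar L+\bar S=D$ shows $\bar\Lambda\in\partial\|\bar L\|_*$ and $\bar\Lambda\in\xi\,\partial\|\bar S\|_1$, so $(\bar L,\bar S)$ is optimal for \eqref{prob:RPCA-Nuclear-L1} with multiplier $\bar\Lambda$. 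Re-centering $\Phi_k$ at $(\bar S,\bar\Lambda)$: the recentred energy converges and a subsequence of its arguments tends to its center, so it converges to $0$, whence $S_k\to\bar S$, $\Lambda_k\to\bar\Lambda$, and then $L_k=D-S_k+\rho_{k-1}(\Lambda_{k-1}-\Lambda_k)\to D-\bar S=\bar L$, which is the claim. I expect the main obstacle to be the non-constant penalty: one must pick the weights in $\Phi_k$ so that the cross terms and change-of-weight errors $\varepsilon_k$ remain summable even when $\rho_k\to0$ (where $1/\rho_k\to\infty$ and the recentring step is most delicate), and one must verify precisely that $\sum_k\rho_k=+\infty$ is exactly what is needed to make the residual term $\tfrac1{\rho_k}(S_{k+1}-S_k)$ in the $L$-inclusion vanish along the limiting subsequence, so that the limit is a genuine KKT point rather than merely approximately stationary.
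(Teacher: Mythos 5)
First, a point of reference: the paper does not prove this statement at all --- it is quoted (with $\rho_k=1/\mu_k$) as Theorem~2 of \cite{Ma09_1J} and the reader is sent there --- so your attempt has to stand on its own rather than be matched against an internal argument. Your preparatory material is correct: the two inclusions $\widetilde\Lambda_{k+1}\in\partial\|L_{k+1}\|_*$ and $\Lambda_{k+1}\in\xi\,\partial\|S_{k+1}\|_1$ extracted from \eqref{RPCA:ADAL}, the residual identity $L_{k+1}+S_{k+1}-D=\rho_k(\Lambda_k-\Lambda_{k+1})$, and the boundedness of $\{\Lambda_k\}$ and $\{\widetilde\Lambda_k\}$ via the norm bounds on the subdifferentials are all right, and they are exactly the boundedness lemmas that \cite{Ma09_1J} also establishes.

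The genuine gap is the assertion that $\sum_k\varepsilon_k<\infty$ ``because $\{\rho_k\}$ is nonincreasing.'' With the weighting you need --- you must weight the $S$-block by $1/\rho_k$ precisely so that the summable decrease $\sum_k\rho_k^{-1}\|S_{k+1}-S_k\|_F^2<\infty$ can be played against $\sum_k\rho_k=+\infty$ to force $\rho_k^{-1}(S_{k+1}-S_k)\to 0$ along a subsequence --- the change-of-weight error is $\varepsilon_k=(\rho_{k+1}^{-1}-\rho_k^{-1})\|S_{k+1}-S^*\|_F^2$. Monotonicity of $\{\rho_k\}$ makes these terms nonnegative, not summable: $\sum_k(\rho_{k+1}^{-1}-\rho_k^{-1})$ telescopes to $+\infty$ whenever $\rho_k\to 0$, and $\rho_k=1/k$ satisfies both hypotheses of the theorem. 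You cannot repair this by bounding $\|S_{k+1}-S^*\|_F$ a priori, because the only such bound in your argument comes from convergence of $\Phi_k$, i.e.\ from the very recursion whose validity is in question --- the reasoning is circular. Rescaling the energy (say to $\rho_k^2\|\Lambda_k-\Lambda^*\|_F^2+\|S_k-S^*\|_F^2$) does make it genuinely monotone, but then the per-step decrease degrades to $\sum_k\|S_{k+1}-S_k\|_F^2<\infty$, which together with $\sum_k\rho_k=+\infty$ no longer yields $\liminf_k\rho_k^{-1}\|S_{k+1}-S_k\|_F=0$, and the limit point can then only be shown to be approximately stationary. In short, the step you yourself flag as ``the main obstacle'' is not a technicality to be checked later; in the regime $\rho_k\to 0$ it is the entire content of the theorem, and your proposal does not close it.
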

%As pointed out in \cite{Candes-Li-Ma-Wright-RPCA-2009}, the convergence result in Theorem \ref{the:Inexact-ADM-RPCA} is rather weak since it makes a very strong assumption on $\mu_k$.

In the next section, we propose our alternating linearization method and present an iteration complexity bound for it.
\section{Alternating Linearization Method for RPCP}\label{sec:ALM}
In this section, we introduce the alternating linearization method (ALM) for solving RPCP, i.e., \eqref{prob:SPCP-missing-equiv} with $\delta=0$. For a given penalty parameter $\rho>0$, when $\delta=0$, the augmented Lagrangian function associated with problem \eqref{prob:SPCP-missing-equiv} is
\bea\label{RPCA:Lagrangian-function-missing}\LCal_\rho(L,S;\Lambda):=\|L\|_*+\xi\|\pi_\Omega(S)\|_1-\langle \Lambda, L+S-\pi_\Omega(D) \rangle + \frac{1}{2\rho}\|L+S-\pi_\Omega(D)\|_F^2,\eea where $\Lambda$ is a matrix of Lagrange multipliers.
%with smooth objective function
%\subsection{Basic ALM} The basic

ALM can be derived from a variant of the ADMM \eqref{RPCA:ADAL}. Note that in each iteration \eqref{RPCA:ADAL} of ADMM, the Lagrange multiplier matrix $\Lambda$ is updated just once, which occurs after the subproblem with respect to $S$ is solved. Since there is no particular reason to minimize the augmented Lagrangian function with respect to $L$ before minimizing it with respect to $S$, it is natural to also update $\Lambda$ after the subproblem with respect to $L$ is solved. By doing this, we get the following symmetric version of the ADMM for a given sequence of penalty multipliers $\{\rho_k\}_{k\in\integers_+}$:
\bea\label{RPCA:ADAL-sym}\left\{\ba{lll}L_{k+1}&:=&\argmin_L\LCal_{\rho_k}(L,S_k;\Lambda_k) \\
                                        \Lambda_{k+\half}&:=&\Lambda_k-(L_{k+1}+S_k-\pi_\Omega(D))/\rho_k \\
                                        S_{k+1}&:=&\argmin_S\LCal_{\rho_k}(L_{k+1},S;\Lambda_{k+\half}) \\
                                        \Lambda_{k+1}&:=&\Lambda_{k+\half}-(L_{k+1}+S_{k+1}-\pi_\Omega(D))/\rho_k.\ea\right.\eea
Although \eqref{RPCA:ADAL-sym} can be applied directly to solve the RPCP problem, i.e., \eqref{prob:SPCP-missing-equiv} with $\delta=0$, an iteration complexity bound for this method is not known. However, when \eqref{RPCA:ADAL-sym} is applied to solve the problem given in \eqref{prob:SPCP-smooth} with $\delta=0$ where the two functions in the objective are both in $C^{1,1}$, we can prove an iteration complexity bound. That is, if we use \eqref{RPCA:ADAL-sym} to solve problem
\be \label{prob:SPCP-smooth-delta=0} \min_{L,S\in\mtn}\{F(L,S)\equiv f_\mu(L)+g_\nu(S): L+S=\pi_\Omega(D)\}, \ee
we have a provable complexity result. With $\|L\|_*$ replaced by $f_\mu(L)$ and $\xi\|S\|_1$ replaced by $g_\nu(S)$ in the augmented Lagrangian function $\LCal_\rho$ given in \eqref{RPCA:Lagrangian-function-missing}, and together with the two updating formulas for $\Lambda$ in \eqref{RPCA:ADAL-sym},  the optimality conditions for the two subproblems in \eqref{RPCA:ADAL-sym} yield:
\be
\label{lambda-gradient-relation}
\Lambda_{k+\half} = \nabla f_\mu(L_{k+1}) \quad \mbox{and} \quad \Lambda_{k+1} = \nabla g_\nu(S_{k+1}).
\ee
Let $\rho_k=\rho$ for all $k\geq 1$ for some $\rho>0$. By substituting \eqref{lambda-gradient-relation} into \eqref{RPCA:ADAL-sym}, and defining
\begin{align}
Q_{g}(L,S) &:= f_\mu(L) + g_\nu(S) - \langle \nabla g_\nu(S), L + S-\pi_\Omega(D) \rangle + \frac{1}{2\rho}\|L+S-\pi_\Omega(D)\|_F^2, \label{def:Q_g}\\
Q_{f}(L,S) &:= f_\mu(L) - \langle \nabla f_\mu(L), L + S-\pi_\Omega(D) \rangle + \frac{1}{2\rho}\|L+S-\pi_\Omega(D)\|_F^2 + g_\nu(S), \label{def:Q_f}
\end{align}
%\eqref{RPCA:ADAL-sym} can be reduced to
we obtain the alternating linearization method given in \textbf{Algorithm~\ref{alg:ALM-general}} that was proposed and analyzed in \cite{Goldfarb-Ma-Scheinberg-2010} by Goldfarb \etal for minimizing the sum of two convex functions. The ALM method, with a nonincreasing sequence $\{\rho_k\}_{k\in\integers_+}$ of proximal term parameters, was first proposed by Kiwiel \etal \cite{Kiwiel-Rosa-1999}. However, no iteration complexity analysis was presented in \cite{Kiwiel-Rosa-1999}.
\begin{algorithm}[h!]
    \caption{Alternating Linearization Method~(ALM)}\label{alg:ALM-general}
    {\small
    \begin{algorithmic}[1]
    \STATE \textbf{input:} $L_0\in\reals^{m\times n}$, $\rho>0$
    \STATE $k\gets 0$, $S_0\gets \pi_\Omega(D)-L_0$
    \WHILE{not converged}
        \STATE $L_{k+1} \gets \argmin_L Q_{g}(L,S_k)$
        \STATE $S_{k+1} \gets \argmin_S Q_{f}(L_{k+1},S)$
        \STATE $k\gets k+1$
    \ENDWHILE
    \RETURN $(L_k,S_k)$
    \end{algorithmic}
    }
\end{algorithm}

In \textbf{Algorithm~\ref{alg:ALM-general}}, the functions $f_\mu$ and $g_\nu$ are alternatingly replaced by their linearizations plus a proximal regularization term to get an approximation to the original function $F$. Thus, \textbf{Algorithm~\ref{alg:ALM-general}} can also be viewed as a proximal point algorithm. %The following convergence and complexity results for \textbf{Algorithm~\ref{alg:ALM-general}} are proved in \cite{Goldfarb-Ma-Scheinberg-2010}.

\begin{theorem}\label{the:ALM}
Suppose $(L^*,S^*)$ is an optimal solution to \eqref{prob:SPCP-smooth-delta=0} with $\delta=0$. The sequence $\{(L_k,S_k)\}_{k\in\integers_+}$ generated by \textbf{Algorithm~\ref{alg:ALM-general}} with $\rho \leq \min\{\mu,\nu\}$ satisfies:
\begin{align}\label{the:ALM-inequa} F(\pi_\Omega(D)-S_k,S_k) - F(L^*,S^*) \leq \frac{\|L^*-L_0\|_F^2}{4\rho k}.\end{align} Thus the sequence $\{F(\pi_\Omega(D)-S_k,S_k)\}$ produced by \textbf{Algorithm~\ref{alg:ALM-general}} converges to $F(L^*,S^*)$. Moreover, if $\beta\min\{\mu,\nu\} \leq\rho\leq \min\{\mu,\nu\}$ with $0<\beta\leq 1$, \textbf{Algorithm~\ref{alg:ALM-general}} needs $O(1/\epsilon^2)$ iterations to obtain an $\epsilon$-optimal solution to problem in \eqref{prob:SPCP-missing-equiv} with $\delta=0$.
\end{theorem}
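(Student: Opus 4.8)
The plan is to follow the alternating-linearization analysis of \cite{Goldfarb-Ma-Scheinberg-2010}, specialized to the constrained, doubly-smoothed problem \eqref{prob:SPCP-smooth-delta=0}, using the gradient-Lipschitz constants $1/\mu$ of $\nabla f_\mu$ and $1/\nu$ of $\nabla g_\nu$ and the multiplier identities \eqref{lambda-gradient-relation}. Write $D_\Omega:=\pi_\Omega(D)$, $F^*:=F(L^*,S^*)$, and for each $k$ introduce the two \emph{feasible} ``shadow'' points obtained by projecting the iterates back onto $\{L+S=D_\Omega\}$: $u_k:=F(L_k,\,D_\Omega-L_k)$ and $v_k:=F(D_\Omega-S_k,\,S_k)$; note $v_k$ is exactly the quantity in \eqref{the:ALM-inequa}, and these are well defined since strong convexity makes each $\argmin$ in \textbf{Algorithm~\ref{alg:ALM-general}} unique.

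First I would record four one-step estimates, all using $\rho\le\min\{\mu,\nu\}$. (i) \emph{Descent}: applying the standard descent lemma to $g_\nu$ at $S_k$ with target point $D_\Omega-L_{k+1}$ and using $1/\nu\le1/\rho$ gives $F(L_{k+1},D_\Omega-L_{k+1})\le Q_g(L_{k+1},S_k)$; symmetrically, applying it to $f_\mu$ at $L_{k+1}$ with target point $D_\Omega-S_{k+1}$ and using $1/\mu\le1/\rho$ gives $F(D_\Omega-S_{k+1},S_{k+1})\le Q_f(L_{k+1},S_{k+1})$. (ii) \emph{Strong convexity}: since $Q_g(\cdot,S_k)$ and $Q_f(L_{k+1},\cdot)$ are $\tfrac1\rho$-strongly convex with minimizers $L_{k+1}$ and $S_{k+1}$, for all $L,S$ we have $Q_g(L_{k+1},S_k)\le Q_g(L,S_k)-\tfrac1{2\rho}\|L-L_{k+1}\|_F^2$ and $Q_f(L_{k+1},S_{k+1})\le Q_f(L_{k+1},S)-\tfrac1{2\rho}\|S-S_{k+1}\|_F^2$.

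From here two things follow. Instantiating (ii) at $L=L^*$ and $S=S^*$, using $L^*+S^*=D_\Omega$ to rewrite $L^*+S_k-D_\Omega=S_k-S^*$ and $L_{k+1}+S^*-D_\Omega=L_{k+1}-L^*$, and using convexity of $g_\nu,f_\mu$ to kill the first-order terms, gives $Q_g(L^*,S_k)\le F^*+\tfrac1{2\rho}\|S_k-S^*\|_F^2$ and $Q_f(L_{k+1},S^*)\le F^*+\tfrac1{2\rho}\|L_{k+1}-L^*\|_F^2$; chaining with (i)--(ii) and adding the two resulting bounds, the common $\|L_{k+1}-L^*\|_F^2$ cancels and leaves the recursion
\[(u_{k+1}-F^*)+(v_{k+1}-F^*)\ \le\ \tfrac1{2\rho}\bigl(\|S_k-S^*\|_F^2-\|S_{k+1}-S^*\|_F^2\bigr).\]
Separately, instantiating (ii) at the feasible choices $L=D_\Omega-S_k$ and $S=D_\Omega-L_{k+1}$ (which annihilate the linear and quadratic terms of $Q_g,Q_f$, so that $Q_g(D_\Omega-S_k,S_k)=v_k$ and $Q_f(L_{k+1},D_\Omega-L_{k+1})=u_{k+1}$) and combining with (i) yields the \emph{interleaved monotonicity} $u_{k+1}\le v_k$ and $v_{k+1}\le u_{k+1}$, i.e.\ $v_0\ge u_1\ge v_1\ge u_2\ge v_2\ge\cdots$. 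Hence $v_k$ is a lower bound for each of the $2k$ numbers $\{u_j-F^*,\,v_j-F^*\}_{j=1}^k$; summing the recursion over $j=0,\dots,k-1$, telescoping, discarding $-\|S_k-S^*\|_F^2\le0$, and using $\|S_0-S^*\|_F=\|(D_\Omega-L_0)-(D_\Omega-L^*)\|_F=\|L_0-L^*\|_F$ gives $2k\,(v_k-F^*)\le\tfrac1{2\rho}\|L_0-L^*\|_F^2$, which is \eqref{the:ALM-inequa}, and convergence $v_k\to F^*$ is immediate. For the complexity statement, choose $\mu=\nu=\tfrac{\epsilon}{4\tau}$ so that, by Theorem~\ref{the:RPCA-Nuclear-L1-epsilon-optimal}, an $\epsilon/2$-optimal solution of \eqref{prob:SPCP-smooth-delta=0} is $\epsilon$-optimal for \eqref{prob:SPCP-missing-equiv}; by \eqref{the:ALM-inequa} this is reached once $k\ge\tfrac{\|L_0-L^*\|_F^2}{2\rho\epsilon}$, and with $\rho\ge\beta\min\{\mu,\nu\}=\tfrac{\beta\epsilon}{4\tau}$ this bound is $\tfrac{2\tau\|L_0-L^*\|_F^2}{\beta\epsilon^2}=\cO(1/\epsilon^2)$.

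The descent-lemma and strong-convexity manipulations are routine; the one load-bearing point I would be most careful about is the interleaved monotonicity $v_0\ge u_1\ge v_1\ge\cdots$, since it is exactly what turns the naive telescoped-average bound $\tfrac1{2\rho k}$ into the sharper $\tfrac1{4\rho k}$ in \eqref{the:ALM-inequa} (there are effectively $2k$ proximal half-steps, not $k$). A minor caveat worth a sentence: the minimizer $L^*$ of the smoothed problem depends on $\mu=\nu$ and hence on $\epsilon$, so the $\cO(1/\epsilon^2)$ claim tacitly uses that $\|L_0-L^*\|_F$ stays bounded as $\epsilon\downarrow0$, which holds because these minimizers lie in a fixed bounded set.
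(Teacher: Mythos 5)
Your proposal is correct, and it is essentially a self-contained reconstruction of the argument the paper outsources entirely to Corollary 2.4 of \cite{Goldfarb-Ma-Scheinberg-2010}: the cited proof is exactly this combination of the descent lemma (valid because $\rho\leq\min\{\mu,\nu\}$ bounds the Lipschitz constants $1/\mu$, $1/\nu$ by $1/\rho$), $\tfrac{1}{\rho}$-strong convexity of the prox subproblems, the interleaved monotonicity of the $2k$ half-steps that yields the constant $4$ rather than $2$ in the denominator, and telescoping in $\|S_k-S^*\|_F^2$, applied after the change of variables $x=L$, $y=\pi_\Omega(D)-S$ that turns the linearly constrained problem into an unconstrained $\min_x f_\mu(x)+g_\nu(\pi_\Omega(D)-x)$. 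All the individual steps check out, including the identification $\|S_0-S^*\|_F=\|L_0-L^*\|_F$ from the initialization $S_0=\pi_\Omega(D)-L_0$, and your closing caveat about the $\epsilon$-dependence of $(L^*,S^*)$ through $\mu=\nu=\epsilon/(4\tau)$ is a legitimate point that the paper glosses over.
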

\begin{proof}
For the proof see Corollary 2.4 in \cite{Goldfarb-Ma-Scheinberg-2010}.
\end{proof}

The iteration complexity bound of $\cO(1/\epsilon^2)$ in Theorem \ref{the:ALM} for RPCP problem can be improved further. The accelerated version of ALM proposed in \cite{Goldfarb-Ma-Scheinberg-2010} needs only $\cO(1/\epsilon)$ iterations to obtain an $\epsilon$-optimal solution, while the computational effort for the subproblems in each iteration is the same as that for the subproblems in ALM.

\subsection{Solving the Subproblems}
We now show how to solve the two subproblems in \textbf{Algorithm~\ref{alg:ALM-general}}. Note that the two subproblems at the iteration $k$ of \textbf{Algorithm~\ref{alg:ALM-general}} reduce to
\begin{align}
L_{k+1} &=\argmin_L f_\mu(L)%+g_\nu(S_k)
-\langle\nabla g_\nu(S_k),L+S_k-\pi_\Omega(D)\rangle+\frac{1}{2\rho}\|L+S_k-\pi_\Omega(D)\|_F^2, \label{sparse-low-rank-ALM-general-sub-1}\\
S_{k+1} &=\argmin_S %f_\mu(L_{k+1})
-\langle\nabla f_\mu(L_{k+1}),L_{k+1}+S-\pi_\Omega(D)\rangle+\frac{1}{2\rho}\|L_{k+1}+S-\pi_\Omega(D)\|_F^2+g_\nu(S). \label{sparse-low-rank-ALM-general-sub-2}
\end{align}
Note that the first-order optimality conditions for \eqref{sparse-low-rank-ALM-general-sub-1} are \bea\label{sparse-low-rank-ALM-general-sub-1-optcond}W_\mu(L_{k+1})-Z_\nu(S_k)+\frac{1}{\rho}(L_{k+1}+S_k-\pi_\Omega(D))=0,\eea where $W_\mu(L)$ and $Z_\nu(S)$ are defined in \eqref{def:W-sigma(L)} and \eqref{def:Z-sigma(Y)}.
It is easy to check that
\begin{align}\label{sol:RPCA-L}
L_{k+1}=U\Diag(\sigma^*)V^\top, \quad \quad \sigma^*_i=\bar{\sigma}_i-\frac{\rho\bar{\sigma}_i}{\max\{\bar{\sigma}_i,\rho+\mu\}}
\quad for \quad i=1,\ldots, \min\{m,n\}
\end{align}
satisfies \eqref{sparse-low-rank-ALM-general-sub-1-optcond}, where
$U~\Diag(\bar{\sigma})V^\top$ is the singular value decomposition of the matrix $\rho Z_\nu(S_k)-S_k+\pi_\Omega(D)$. Thus, solving the subproblem \eqref{sparse-low-rank-ALM-general-sub-1} corresponds to a singular value decomposition. The first-order optimality conditions for \eqref{sparse-low-rank-ALM-general-sub-2} are: \bea\label{sparse-low-rank-ALM-general-sub-2-optcond}-W_\mu(L_{k+1})+\frac{1}{\rho}(L_{k+1}+S_{k+1}-\pi_\Omega(D))+Z_\nu(S_{k+1})=0.\eea
It is easy to check that
\begin{align}\label{sol:RPCA-Y}
%S_{ij}=B_{ij}-\rho\min\left\{\xi,\max\left\{-\xi,\frac{B_{ij}}{\nu+\rho}\right\}\right\}, \quad i=1,\ldots,m, \quad j=1,\ldots,n
(S_{k+1})_{ij}=\left\{
                 \begin{array}{ll}
                   \sgn(B_{ij})\max\left\{\frac{\nu}{\nu+\rho}|B_{ij}|,\ |B_{ij}|-\rho\xi\right\}, & \forall~(i,j)\in\Omega; \\
                   B_{ij}, & \forall~(i,j)\notin\Omega,
                 \end{array}
               \right.
\end{align}
satisfies \eqref{sparse-low-rank-ALM-general-sub-2-optcond}, where $B=\rho W_\mu(L_{k+1})-L_{k+1}+\pi_\Omega(D).$ Thus, solving the subproblem \eqref{sparse-low-rank-ALM-general-sub-2} can be done very cheaply.
\section{Numerical results for RPCP}
\label{sec:numerics-RPCP}
We conducted two sets of numerical experiments using \proc{ALM} (\textbf{Algorithm~\ref{alg:ALM-general}}) to solve \eqref{prob:SPCP-missing-equiv} with $\delta=0$. In the first set of experiments, we compared ALM with EADM and IADM~\cite{Ma09_1J} on problems with real data. In the second set, we solved randomly generated instances of the RPCP problem with missing observations. In this setting, we tested only the \proc{ALM} algorithm to see how the run times scaled with respect to problem parameters and size.
\subsection{RPCP problems with real data}
In this section, we report the numerical results of our ALM for solving \eqref{prob:RPCA-Nuclear-L1} with real data arising from background extraction from surveillance video, removing shadows and specularities from face images and video denoising from heavily corrupted data as in \cite{Candes-Li-Ma-Wright-RPCA-2009}. We will compare the performance of ALM with EADM and IADM in  \cite{Ma09_1J}.

EADM and IADM adopt a continuation strategy on $\rho$, in which, $\rho$ is updated in every iteration via \be \label{continuation-mu}\rho_{k+1} = \eta\rho_k \ee after updating the Lagrange multiplier matrix, where $\eta\in(0,1]$. The strategy used in EADM sets $\eta = 1/6$ and $\rho_0 = 2\|\sgn(D)\|$. The one used in IADM sets $\eta = 2/3$ and $\rho_0 = 0.8\|D\|$. To guarantee a fair comparison and to further accelerate ALM, we used the same continuation strategy as IADM, i.e., we set $\eta=2/3$ and $\rho_0 =0.8\|D\|$ in ALM.
%We found that when the same continuation strategy was adopted for FALM, there was not much difference between FALM and ALM. Thus we only report the numerical results obtained by ALM with continuation.
Our ALM codes were written in MATLAB. The MATLAB codes of EADM and IADM were downloaded from http://watt.csl.illinois.edu/$\sim$perceive/matrix-rank/sample\_code.html. The default settings of EADM and IADM were used. %All the numerical experiments were conducted on a Linux machine with 4 quad-core 800 MHz, 512 KB cash/core and 32 GB RAM running MATLAB 7.12.
All the numerical experiments were conducted on a Windows 7 machine with Intel Core i7-3520M Processor (4 MB cash, 2 cores at 2.9 GHz), and 16 GB RAM running  MATLAB 7.14 (64 bit).

%{\color{blue} We can remove these sentences. Note that in each iteration of ALM, EADM and IADM, one has to perform a matrix shrinkage operation, which corresponds to an SVD and is expensive for large matrices. However, we do not have to compute the whole SVD, as only the singular values that are larger than the threshold $\rho$ in the matrix shrinkage operation and the corresponding singular vectors are needed.
%\textbf{Shiqian: It looks like this property is not true for ALM - see \eqref{sol:RPCA-L}.}}

Note that in each iteration of ALM, EADM and IADM, one has to compute a partial SVD with only certain number of leading singular values and corresponding singular vectors. To make a fair comparison, we adopted a modified version of LANSVD function of PROPACK with threshold option to compute the partial SVDs in ALM, EADM and IADM. This modified version of LANSVD function\footnote{Available from http://svt.stanford.edu/code.html} can compute only the singular values that are greater than a given threshold. In particular, in EADM and IADM, one has to solve a problem in the form of
$\min_L\LCal_{\rho_k}(L,S_k;\Lambda_k),$
which corresponds to computing SVD of $D+\rho_k\Lambda^k-S^k$ with only singular values that are greater than $\rho_k$. In ALM, one has to compute a partial SVD with threshold $\mu_k$ in order to compute \eqref{sol:RPCA-L}. %In practice, for EADM and IADM, we set $sv=\min(m,n)/10$ and compute the leading $sv$ singular values of $D+\rho_k\Lambda^k-S^k$ first. Then we check whether the smallest singular value computed is less than the threshold $\rho_k$. Otherwise, we compute the next 10 leading singular values until the smallest singular value is less than $\rho_k$. For ALM, we implement the same procedure with the threshold changed to $\mu_k$.
Note that by setting a threshold for the SVD computation in ALM, $L_{k+1}$ in \eqref{sol:RPCA-L} is computed inexactly. However, as $\mu_k$ is approaching to 0, the computation of $L_{k+1}$ becomes more accurate.
We denote the total number of singular values computed during all the iterations by $\mathbf{lsv}$.

%\red{Shiqian: I believe that we should mention what threshold were used for each algorithm. For EADM and IADM, the threshold should be $\rho_k$ and for ALM it is $\mu_k$. We should note that with these thresholds, EADM and IADM computations are exact, but for ALM by setting a threshold, we computing $L_{k+1}$ in \eqref{sol:RPCA-L} inexactly. However, as $\mu_k$ is close to 0, $L_{k+1}$ computations become more accurate.}

%\red{In EADM and IADM, the authors used PROPACK \cite{Larsen-Propack} to compute the partial SVDs. To do this, one has to specify the number of leading singular values and vectors (denoted by $sv_k$) to be computed at iteration $k$, and some heuristics were adopted in EADM and IADM. To make a fair comparison with EADM and IADM, we used a modified version of PROPACK  }
%
%Let $d:=\min\{m,n\}$. Both EADM and IADM start with $sv_0=100$ and update $sv_k$ via
%\begin{eqnarray}\label{update-sv-pspg}
%sv_{k+1} = \left\{
%\begin{array}{ll}
%svp_k + 1, & \mbox{ if } svp_k < sv_k \\
%\min\{svp_k+\round(\kappa d),d\}, & \mbox{ if } svp_k = sv_k,
%\end{array}\right.
%\end{eqnarray}
%where $\kappa=0.05$ and $svp_k$ is the number of singular values that are larger than the penalty parameter $\rho_k$.
%
%%\red{\textbf{Shiqian:} Please specify how you select $svp_k$.}
%
%We used the same updating formula in ALM.

All algorithms were terminated when the relative infeasibility was less than $10^{-7}$, i.e.,
\[\frac{\|L_k+S_k-D\|_F}{\|D\|_F} < 10^{-7}.\]
%In this section, $\xi$ was always chosen as $1/\sqrt{m}$ where $m$ was the number of rows of the matrix $D$.
\subsubsection{Foreground extraction from surveillance video}
\label{sec:video_test_results-RPCA}
Extracting the almost still background from a sequence of frames in a video is an important task in video surveillance. This problem is difficult due to the presence of moving foreground in the video. Interestingly, this problem can be formulated as a RPCP or SPCP problem depending on the existence of noise.
Note that by stacking the columns of each frame into a long vector, we can get a matrix $D$ whose columns correspond to the sequence of frames of the video. This matrix $D$ can be decomposed into the sum of three matrices $D:=L^0+S^0+N^0$, where $N^0$ is a dense noise matrix. The matrix $L^0$, which represents the backgrounds in the frames, should be of low rank due to the correlation between frames. The matrix $S^0$, which represents the moving foregrounds in the frames, should be sparse since the foreground usually occupies a small portion of each frame.

Here, we apply ALM to solve the RPCP problem \eqref{prob:RPCA-Nuclear-L1} corresponding to four videos introduced in \cite{Li04_1J}, where $N^0=\mathbf{0}$. Our first example is a sequence of 200 grayscale frames of size $144\times 176$ from a video of a hall at an airport; thus $D \in \br^{25344\times 200}$. The second example is a sequence of 300 grayscale frames of size $130\times 160$ from a video of an escalator at an airport; thus $D \in \br^{20800\times 300}$. In this video, the background is changing because of the moving escalator. The third example is a sequence of 250 grayscale frames of size $128\times 160$ from a video taken in a lobby; thus $D \in \br^{20480\times 250}$. In this video, the background changes with the switching off of some lights. The fourth example is a sequence of 320 grayscale frames of size $128\times 160$ from a video taken at a campus. In this video, the background is changing because of the oscillating trees.

Table \ref{tab:surveillance-video} summarizes the numerical results on these problems, where $\mathbf{cpu}$ denotes the running time of \proc{ALM} in seconds, and $\mathbf{lsv}$ denotes the average number of leading singular values computed. We note that the CPU times of ALM and were roughly comparable (the CPU time of ALM was slightly better in the third example, but IADM took slightly less CPU time in the first, second and fourth examples). However, both ALM and IADM were much faster than EADM. For example, ALM was about 15 times faster than EADM for the second test problem.
\begin{table}[h!]{\small
\begin{center}\caption{Comparison of ALM, EADM and IADM on surveillance video problems}\label{tab:surveillance-video} %\scriptsize
\begin{tabular}{|l c c | r r | r r | r r |}\hline
\multicolumn{3}{|c|}{} & \multicolumn{2}{|c|}{ALM} &
\multicolumn{2}{|c|}{Exact ADM} & \multicolumn{2}{|c|}{Inexact ADM} \\\hline

Problem & $m$ & $n$ & \textbf{lsv} & \textbf{cpu}   & \textbf{lsv} & \textbf{cpu} &  \textbf{lsv} & \textbf{cpu} \\\hline

Hall   & 25344 & 200 & 6713  &      43 & 104083 &     450 & 5357 &      38   \\\hline

Escalator & 20800 & 300  & 9513  &      68   & 168326 &     992  & 7392 &      59  \\\hline

Lobby  & 20480 & 250  & 6377  &      46  & 26385  &     855  & 3839 &      50  \\\hline

Campus & 20480 & 320  & 9596  &      70  & 181301 &     882  & 7889 &      63   \\\hline

\end{tabular}
\end{center}}
\end{table}

Some frames of the videos and the recovered backgrounds and foregrounds are shown in Figures \ref{fig:surveillance12} and \ref{fig:surveillance34}. We only show the frames produced by ALM, because EADM and IADM produced visually identical results. From these figures we can see that ALM can effectively separate the nearly still background from the moving foreground. We note that the numerical results in \cite{Candes-Li-Ma-Wright-RPCA-2009} show that the model \eqref{prob:RPCA-Nuclear-L1} produces much better results than other competing models for background separation in surveillance video.
\begin{figure}[h!]\vspace{-0.5cm}
\centering \subfigure{
\includegraphics[scale=0.51]{./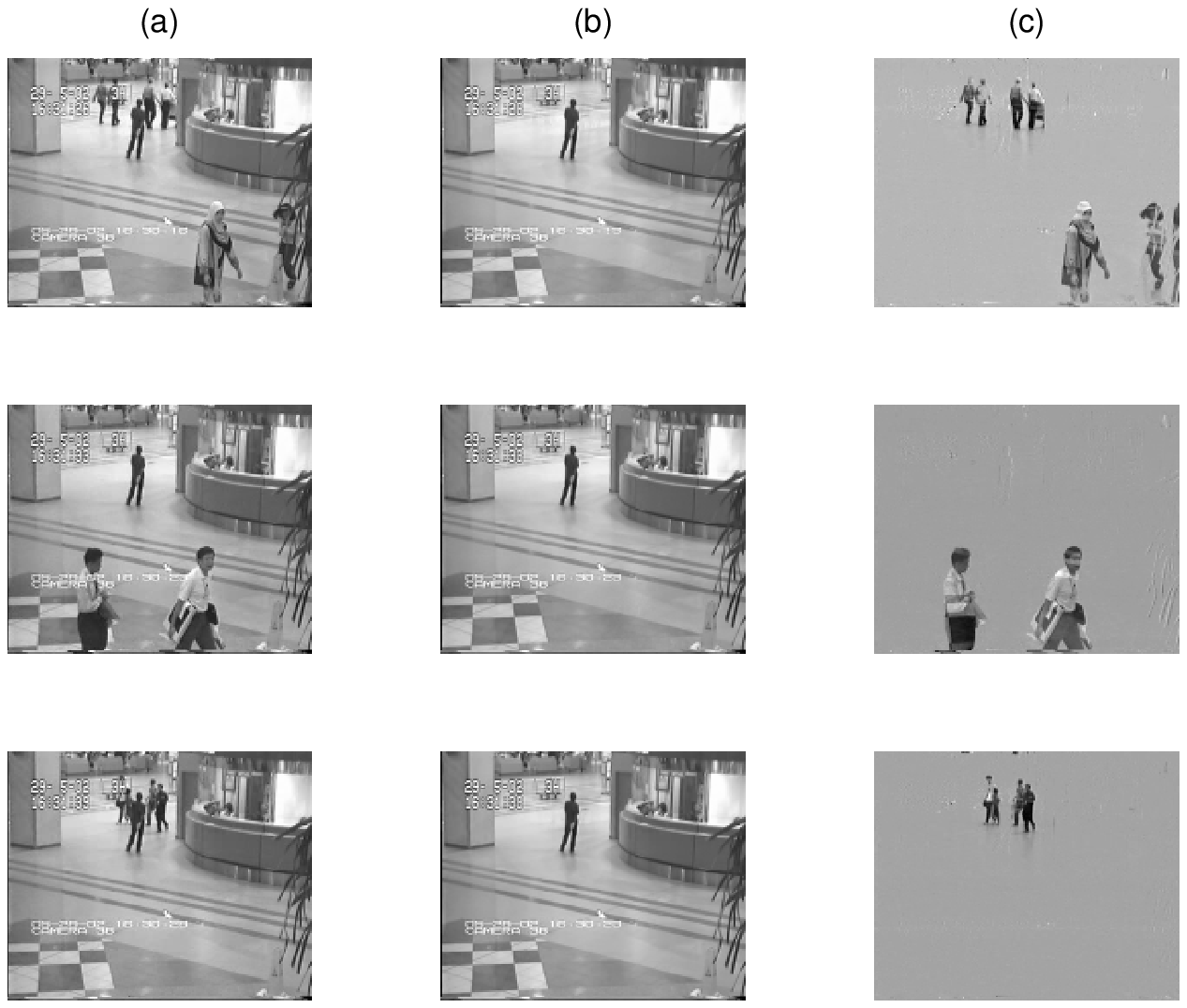}}%\label{fig:Hall_airport}}\hspace{-1cm}
\centering \subfigure{
\includegraphics[scale=0.57]{./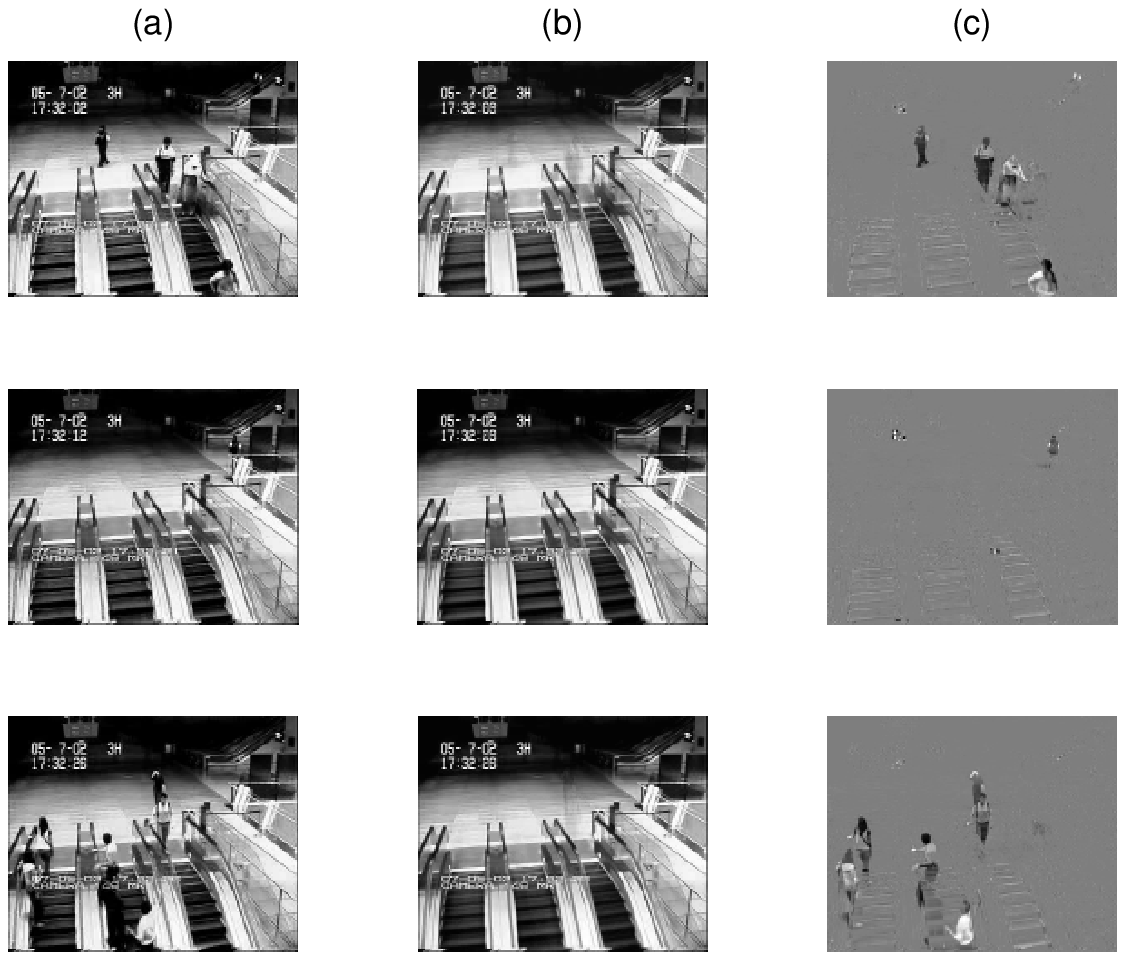}}%\label{fig:Escalator_airport}}\hspace{-4cm}%\vspace{-1cm}
\caption{Images in columns (a) are the frames of the surveillance video. Images in columns (b) are the static backgrounds recovered by ALM. Note that the man who kept still in all the 200 frames of the first video stays as in the background. Images in columns (c) are the moving foregrounds recovered by ALM. Note that some artifacts of the escalator appears in the sparse matrix part because the background is changing as the escalator moves. }
\label{fig:surveillance12}
\end{figure}
\begin{figure}[h!]
\centering \subfigure{
\includegraphics[scale=0.57]{./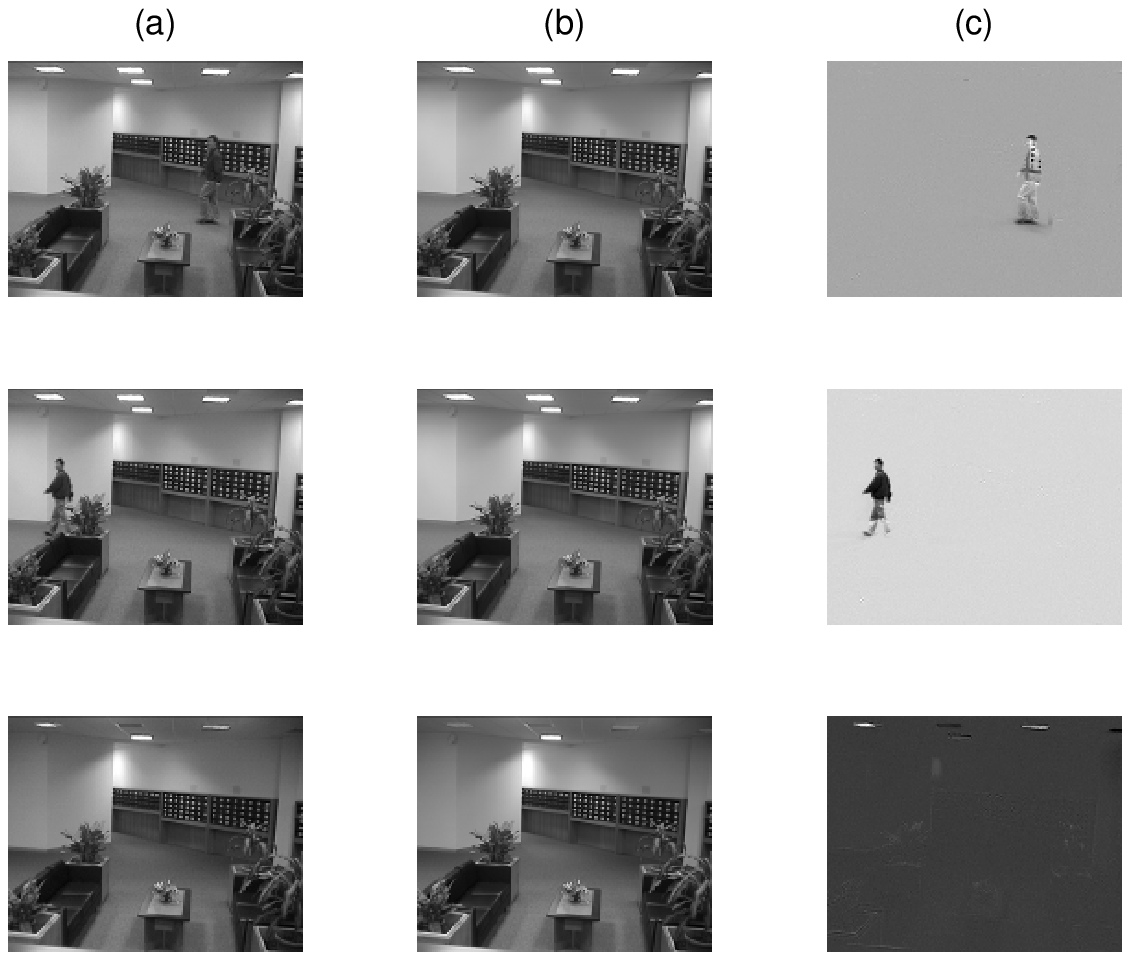}}%\label{fig:Lobby}}\hspace{-1cm}
\centering \subfigure{
\includegraphics[scale=0.59]{./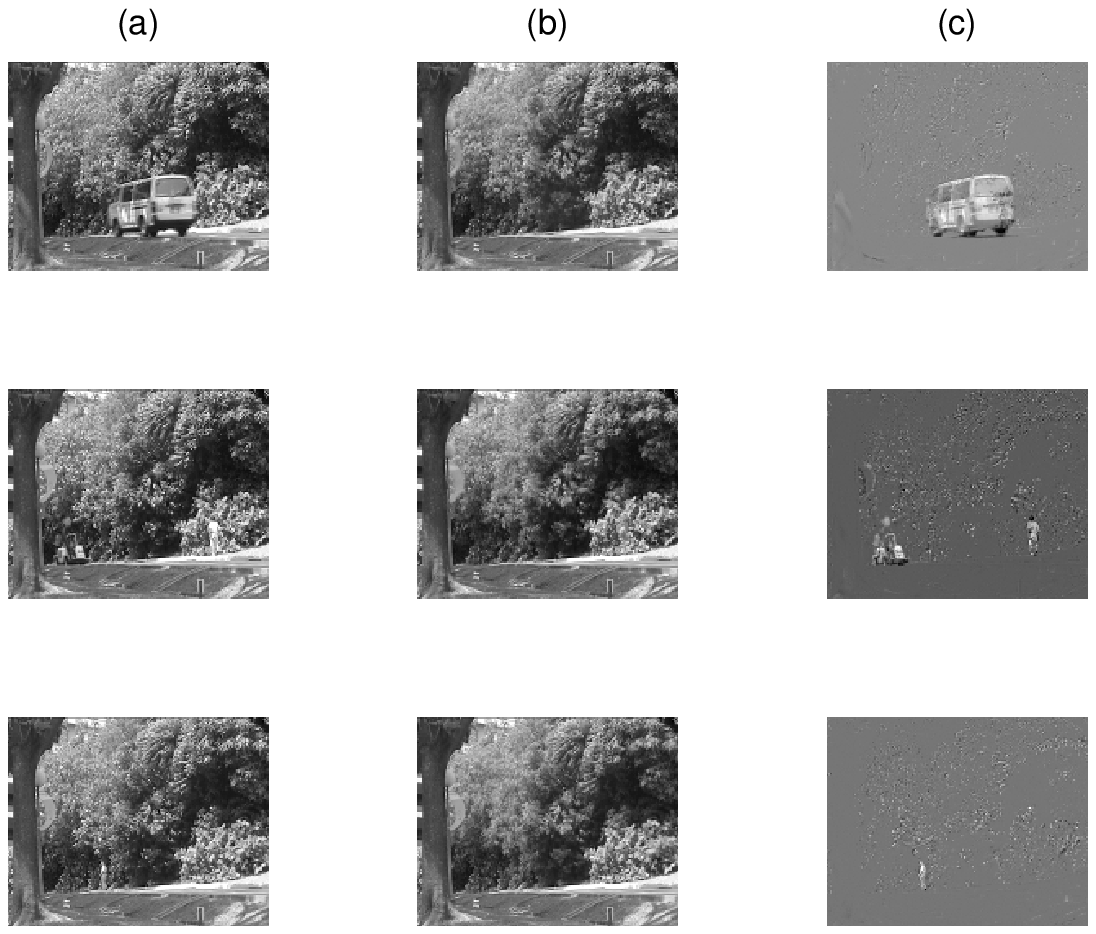}}%\label{fig:Campus_tree}}\hspace{-4cm}%\vspace{-1cm}
\caption{Images in columns (a) are the frames of the surveillance video. Images in columns (b) are the static backgrounds recovered by ALM. Images in columns (c) are the moving foregrounds recovered by ALM. Note that some artifacts of the trees appears in the sparse matrix part because the background is changing as the trees oscillate.}
\label{fig:surveillance34}
\end{figure}

%\begin{table}[t!]{\scriptsize
%\begin{center}\caption{Comparison of ALM, EADM and IADM on surveillance video problems}\label{tab:surveillance-video} %\scriptsize
%\begin{tabular}{|l c c | c r r | c r r | c r r |}\hline
%\multicolumn{3}{|c|}{} & \multicolumn{3}{|c|}{ALM} &
%\multicolumn{3}{|c|}{Exact ADM} & \multicolumn{3}{|c|}{Inexact ADM} \\\hline
%
%Problem & $m$ & $n$ & \textbf{svd}  & \textbf{lsv} & \textbf{cpu} & \textbf{svd}  & \textbf{lsv} & \textbf{cpu} & \textbf{svd}  & \textbf{lsv} & \textbf{cpu} \\\hline
%
%Hall   & 25344 & 200 & 43 & 159 & 01:59 & 550 & 189 & 20:15 & 38 & 154 & 02:05 \\\hline
%
%Escalator & 20800 & 300 & 45 & 219 & 02:36 & 777 & 214 & 38:51 & 39 & 199 & 02:31 \\\hline
%
%Lobby  & 20480 & 250 & 41 & 147 & 02:30 & 582 & 37 & 43:06 & 37 & 111 & 03:07  \\\hline
%
%Campus & 20480 & 320 & 44 & 241 & 03:39 & 652 & 277 & 45:26 & 38 & 219 & 03:48  \\\hline
%
%\end{tabular}
%\end{center}}
%\end{table}
%However, we must point out that these comparison results are related to the continuation schemes \eqref{continuation-mu} and updating formula for $sv_k$ in \eqref{update-sv-pspg} with $\kappa=0.05$. If different continuation schemes and updating formula for $sv_k$ are adopted, the comparisons can be different.

\subsubsection{Removing shadows and specularities from face images}
\label{sec:shadow-removal}
Removing shadows and specularities from face images is another problem that fits into the RPCA framework when they are not corrupted by a dense noise matrix, i.e. $N^0=\mathbf{0}$. Suppose that we have many images of the same face taken under different illumination conditions. Hence, different shadows and specularities exist in different images. However, due to the correlation between the images, we should be able to remove shadows and specularities by RPCA. Note that by stacking the columns of each face image into a long vector, we get a matrix $D$ whose columns correspond to the sequence of images of the face.
This matrix $D$ can be decomposed into the sum of two matrices $D:=L^0+S^0$. The matrix $L^0$, which represents the face in the images, should be of low rank due to the correlation between images. The matrix $S^0$, which represents the shadows and specularities in the images, should be sparse since the shadows and specularities only occupy a small portion of each image.
\newpage
Here we present results on images taken from the dataset named ``yaleB01\_P00.tar.gz'' in the Yale B database \cite{Georghiades-Belhumeur-Kriegman-PAMI-2001}. Each image had a resolution $200\times 200$ and there were a total of 65 illuminations per subject, yielding a matrix $D\in\br^{40000\times 65}$. From Table \ref{tab:face} we see that ALM and IADM were roughly comparable in terms of the CPU time, and they were both about 20 times faster than EADM.
\begin{table}[ht]{\small
\begin{center}\caption{Comparison of ALM, EADM and IADM on face image problems}\label{tab:face} %\scriptsize
\begin{tabular}{|l c c | r r| r r| r r|}\hline
\multicolumn{3}{|c|}{} & \multicolumn{2}{|c|}{ALM} &
\multicolumn{2}{|c|}{Exact ADM} & \multicolumn{2}{|c|}{Inexact ADM} \\\hline

Problem & $m$ & $n$  & \textbf{lsv} & \textbf{cpu} &   \textbf{lsv} & \textbf{cpu} &  \textbf{lsv} & \textbf{cpu} \\\hline

Face image & 40000 & 65 & 2622  &      17 & 73721  &     315  & 2351 &      15 \\\hline

\end{tabular}
\end{center}}
\end{table}

Again, we only show the frames produced by ALM, because EADM and IADM produced visually identical results. The images recovered by ALM are shown in Figure \ref{fig:face_recognition}. From these figures we see that the shadows and specularities have been effectively detected and put into $S^0$ by ALM.
\begin{figure}[h!]\hspace{2cm}%\hspace{-4cm}
\includegraphics[scale=0.7]{./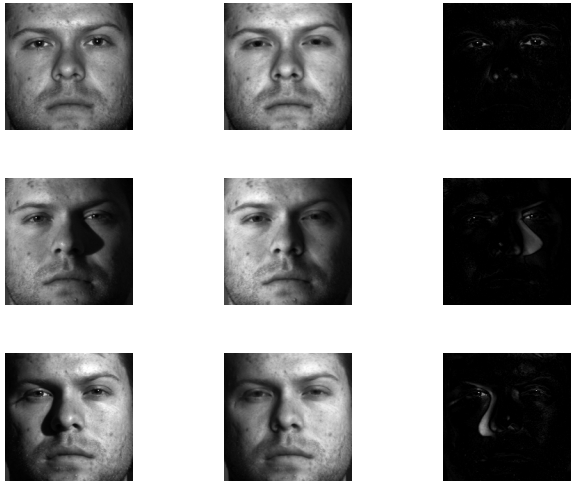}
\caption{The first column depicts the original face images. The second column depicts face images after removing shadows and specularities by ALM. The third column correspond to the removed shadows and specularities, i.e., the sparse errors.}
\label{fig:face_recognition}
\end{figure}
%\begin{table}[ht]{\scriptsize
%\begin{center}\caption{Comparison of ALM, EADM and IADM on face image problems}\label{tab:face} %\scriptsize
%\begin{tabular}{|l c c | c r r| c r r| c r r|}\hline
%\multicolumn{3}{|c|}{} & \multicolumn{3}{|c|}{ALM} &
%\multicolumn{3}{|c|}{Exact ADM} & \multicolumn{3}{|c|}{Inexact ADM} \\\hline
%
%Problem & $m$ & $n$ & \textbf{svd} & \textbf{lsv} & \textbf{cpu} & \textbf{svd} & \textbf{lsv} & \textbf{cpu} & \textbf{svd} & \textbf{lsv} & \textbf{cpu} \\\hline
%
%Face image & 40000 & 65 & 42 & 63 & 00:57 & 1135 & 65 & 19:31  & 37 & 65 & 01:00 \\\hline % these are new results on zephyr
%% Face image & 40000 & 65 & 42 & 01:39 & 1055 & 29:39  & 37 & 01:33  \\\hline  these are old results on dondon machine.
%
%\end{tabular}
%\end{center}}
%\end{table}
\subsubsection{Video denoising} We now consider the problem of denoising videos corrupted by \emph{impulsive noise} \cite{Yang-Zhang-Yin-08}. Again, as in section~\ref{sec:shadow-removal}, we assume that the frames are not corrupted by a dense noise matrix, i.e. $N^0=\mathbf{0}$. As before, we get a large matrix $D$ whose columns correspond to the frames of the video. Then $D$ can be decomposed into two parts $D:=L^0+S^0$, where the matrix $L^0$, which corresponds to the matrix formed by the uncorrupted video, is expected to be of low rank due to the correlation between frames; and the matrix $S^0$, which corresponds to the impulsive noise, is expected to be sparse. The colored video used in our experiment was downloaded from the website http://media.xiph.org/video/derf. This video consisted of 300 frames where each frame was an image stored in the RGB format, as a $144\times 176\times 3$ array. The video was then reshaped into a $144\times 176$ by $3\times 300$ matrix, i.e., $L^0\in\br^{25344\times 900}$. We then added impulsive noise $S^0$ to the video, by randomly choosing 20\% of the entries of $S^0$ and setting these entries to values drawn from an i.i.d. Gaussian distribution $\varrho\mathcal{N}(0,1)$ with $\varrho=10^3$. The other entries of $S^0$ were set to zero. Finally, we set $D:=L^0+S^0$.

From Table \ref{tab:video-denoising} we see that although ALM is much faster than EADM, it is substantially slower than IADM on this problem. Three frames of the video recovered by ALM are shown in Figure \ref{fig:video-denoising-akiyo}. From Figure \ref{fig:video-denoising-akiyo} we see that, ALM can denoise the video heavily corrupted by impulsive noise very well.

\begin{table}[h!]{\small
\begin{center}\caption{Comparison of ALM, EADM and IADM on video denoising problem}\label{tab:video-denoising} %\scriptsize
\begin{tabular}{|l c c | r r| r r| r r|}\hline
\multicolumn{3}{|c|}{} & \multicolumn{2}{|c|}{ALM} &
\multicolumn{2}{|c|}{Exact ADM} & \multicolumn{2}{|c|}{Inexact ADM} \\\hline
Problem & $m$ & $n$   & \textbf{lsv} & \textbf{cpu} &  \textbf{lsv} & \textbf{cpu} &   \textbf{lsv} & \textbf{cpu} \\\hline
Video denoising & 25344 & 900 & 34623 &     526  & 15896  &    1157  & 2023 &      249  \\\hline
\end{tabular}
\end{center}}
\end{table}

\begin{figure}[h!]\vspace{-1cm}
\centering \includegraphics[scale=0.6]{./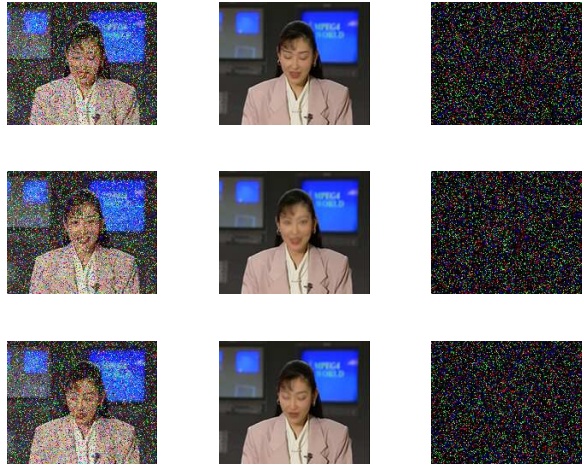} \vspace{-0.5cm}
\caption{Images in the first column are noisy frames with impulsive noise. Images in the second column are recovered frames. Images in the third column are recovered noise.} \label{fig:video-denoising-akiyo}
\end{figure}
\vspace{-0.5cm}
%\begin{table}[h!]{\scriptsize
%\begin{center}\caption{Comparison of ALM, EADM and IADM on video denoising problem}\label{tab:video-denoising} %\scriptsize
%\begin{tabular}{|l c c | c r r| c r r| c r r|}\hline
%\multicolumn{3}{|c|}{} & \multicolumn{3}{|c|}{ALM} &
%\multicolumn{3}{|c|}{Exact ADM} & \multicolumn{3}{|c|}{Inexact ADM} \\\hline
%
%Problem & $m$ & $n$ & \textbf{svd} & \textbf{lsv} & \textbf{cpu} & \textbf{svd} & \textbf{lsv} & \textbf{cpu} & \textbf{svd} & \textbf{lsv} & \textbf{cpu} \\\hline
%
%Video denoising & 25344 & 900 & 42 & 535 & 23:25 & 264 & 53 & 51:47 & 36 & 69 & 14:54 \\\hline
%% Video denoising & 25344 & 900 & 42 & 01:00:18 & 264 & 01:30:11  & 36 & 28:16  \\\hline   old results on dondon machine
%
%\end{tabular}
%\end{center}}
%\end{table}
%However, we should note that these results are related to the continuation scheme \eqref{continuation-mu} and updating formula \eqref{update-sv}. Here we adopted in ALM the default settings of \eqref{continuation-mu} and \red{\eqref{update-sv-pspg}} used in IADM with \red{$\kappa=0.05$}. With different schemes and formulas, the results can be very different.

\subsection{Random RPCP Problems with Missing Data}
In this section, we solved randomly generated instances of the RPCP problem with missing data, i.e., \eqref{prob:SPCP-missing-equiv} with $\delta=0$. In this setting, we tested only the \proc{ALM} algorithm to see how the run times scaled with respect to problem parameters and size

We set $D:=L^0+S^0$, where the rank $r$ matrix $L^0\in\br^{n\times n}$ was created as the product $U V^\top$, of random matrices $U\in\br^{n\times r}$ and $V\in\br^{n\times r}$ with i.i.d. Gaussian entries $\mathcal{N}(0,1)$ and the sparse matrix $S^0$ was generated by choosing its support uniformly at random and its nonzero entries uniformly i.i.d. in the interval $\left[-\sqrt{8r/\pi},\sqrt{8r/\pi}\right]$. In our synthetic problems, we wanted the non-zero entries of the sparse component and the entries of the low-rank component to have the same magnitude. Indeed, for large $n$, $L_{ij}^0\sim \sqrt{r}~\mathcal{N}(0,1)$ for all i,j. Hence, $E[|L_{ij}^0|]=\sqrt{\frac{2r}{\pi}}$. Therefore, the way we created $S_{ij}^0$ for $(i,j)\in\Lambda$ ensures that $E[|S_{ij}^0|]=\sqrt{\frac{2r}{\pi}}$.

In Table \ref{tab:MC}, $\mathbf{c_r}:=\rank(L^0)/n$, $\mathbf{c_p}:=\|S^0\|_0/n^2$, the relative errors $\textbf{relL} := \|L-L^0\|_F / \|L^0\|_F$ and $\textbf{relS}: = \|S-S^0\|_F / \|S^0\|_F$, and the sampling ratio of $\Omega$, $\mathbf{SR}=m/n^2$. The $m$ indices in $\Omega$ were generated uniformly at random. We set $\xi=1/\sqrt{n}$ and stopped ALM when the relative infeasibility $\|L+S-\pi_\Omega(D)\|_F/\|\pi_\Omega(D)\|_F < 10^{-5}$ and for our continuation strategy, we set $\rho_0 = \|\pi_\Omega(D)\|_F/1.25$.

%The number of leading singular values $sv_k$ to be computed at iteration $k$ was updated in the following manner with $d:=\min\{m,n\}$ and $sv_0=100$, which was suggested in \cite{Tao09_1J}.
%\begin{eqnarray}%\label{update-sv-pspg}
%sv_{k+1} = \left\{
%\begin{array}{ll}
%\min(svp_k + \round(\kappa_1 d), d), & \mbox{ if } svp_k < sv_k \\
%\min(svp_k, d), & \mbox{ if } svp_k = sv_k \\
%\max(svp_k-\round(\kappa_2 d),10), & \mbox{ if } svp_k > sv_k,
%\end{array}\right.
%\end{eqnarray}
%where $\kappa_1=0.04$, $\kappa_2=0.02$ and $svp_k$ is the number of singular values that are larger than the penalty parameter $\rho_k$.

The test results obtained using ALM to solve \eqref{prob:SPCP-missing-equiv} with $\delta=0$ when the nonsmooth functions replaced by their smoothed approximations are given in Table \ref{tab:MC}. For each test scenario, i.e., for fixed $n$, $\mathbf{c_p}$, $\mathbf{c_p}$ and $\mathbf{SR}$ values, we solved $5$ random problem instances. All the results in Table \ref{tab:MC} were averaged over five runs. The CPU times reported were in seconds. From Table \ref{tab:MC} we see that ALM recovered the test matrices from a limited number of observations. Note that a fairly high number of samples was needed to obtain small relative errors due to the presence of noise. %The number of iterations needed was almost constant (around 36), no matter the size of the problems. %The CPU times (in seconds) needed are also reported.
\begin{table}[ht]{\footnotesize
\begin{center}\caption{{Numerical results for noisy matrix completion problems}}\label{tab:MC}
\begin{tabular}{|c c| c c c c c|c c c c c|}\hline
$\mathbf{c_r}$ & $\mathbf{c_p}$    & \textbf{iter} & \textbf{lsv} & \textbf{relL} & \textbf{relS} & \textbf{cpu} & \textbf{iter} & \textbf{lsv} & \textbf{relL} & \textbf{relS} & \textbf{cpu}            \\\hline
\multicolumn{2}{|c|}{} & \multicolumn{5}{|c|}{$\mathbf{SR}=90\%, \mathbf{n} = 500$} & \multicolumn{5}{|c|}{$\mathbf{SR}=80\%, \mathbf{n}=500$} \\\hline

0.05 & 0.05 & 24 & 1811 & 5.4e-006 & 3.0e-005 & 3.9 & 25 & 2034 & 5.5e-006 & 2.9e-005 & 4.3 \\\hline
0.05 & 0.10 & 23 & 2516 & 8.7e-006 & 3.4e-005 & 5.5 & 25 & 2782 & 7.4e-006 & 2.7e-005 & 6.1  \\\hline
0.10 & 0.05 & 24 & 2702 & 8.2e-006 & 3.5e-005 & 5.4 & 29 & 7018 & 2.0e-003 & 9.0e-003 & 6.6  \\\hline
0.10 & 0.10 & 30 & 9900 & 4.2e-004 & 1.4e-003 & 6.3 & 29 & 11235 & 1.0e-002 & 3.2e-002 & 5.6  \\\hline

\multicolumn{2}{|c|}{} & \multicolumn{5}{|c|}{$\mathbf{SR}=90\%, \mathbf{n} = 1000$} & \multicolumn{5}{|c|}{$\mathbf{SR}=80\%, \mathbf{n}=1000$} \\\hline

0.05 & 0.05 & 26 & 3643 & 5.6e-006 & 3.3e-005 & 21.5 & 27 & 3530 & 6.6e-006 & 3.6e-005 & 21.5 \\\hline
0.05 & 0.10 & 27 & 4869 & 8.2e-006 & 3.1e-005 & 31.7 & 27 & 5326 & 7.3e-006 & 2.6e-005 & 34.2 \\\hline
0.10 & 0.05 & 26 & 5412 & 8.4e-006 & 3.7e-005 & 31.4 & 29 & 7470 & 5.2e-004 & 2.3e-003 & 47.1 \\\hline
0.10 & 0.10 & 25 & 7354 & 2.2e-005 & 7.0e-005 & 47.6 & 30 & 21495 & 6.3e-003 & 2.0e-002 & 37.3 \\\hline

\multicolumn{2}{|c|}{} & \multicolumn{5}{|c|}{$\mathbf{SR}=90\%, \mathbf{n} = 1500$} & \multicolumn{5}{|c|}{$\mathbf{SR}=80\%, \mathbf{n}=1500$} \\\hline

0.05 & 0.05 & 27 & 4692 & 6.9e-006 & 4.2e-005 & 49.7 & 27 & 5173 & 7.0e-006 & 3.8e-005 & 56.0  \\\hline
0.05 & 0.10 & 27 & 7427 & 9.1e-006 & 3.5e-005 & 84.8 & 28 & 7930 & 7.0e-006 & 2.6e-005 & 91.4 \\\hline
0.10 & 0.05 & 28 & 7859 & 9.0e-006 & 4.0e-005 & 83.3 & 28 & 10212 & 1.7e-004 & 7.5e-004 & 117.2 \\\hline
0.10 & 0.10 & 27 & 11451 & 9.1e-006 & 2.9e-005 & 143.2 & 31 & 33003 & 4.7e-003 & 1.4e-002 & 113.7 \\\hline
\end{tabular}
\end{center}}
\end{table}
\section{Proximal Gradient Method for SPCP}
\label{sec:SPCP}
In this section we focus on the problem \eqref{prob:SPCP-partial-smooth} to solve the SPCP problem \eqref{prob:SPCP-missing-equiv} and show how the proximal gradient algorithm, i.e., the FISTA algorithm in \cite{Beck-Teboulle-2009}, can be applied to it. An advantage of this algorithm over to ALM algorithm is that only one of the two terms in the objective function of the SPCP problem \eqref{prob:SPCP-missing-equiv} needs to be smoothed. Note that this algorithm can also easily be applied to the RPCP problem. The FISTA algorithm for \eqref{prob:SPCP-partial-smooth} is given in \textbf{Algorithm~\ref{alg:nsNesterov}}.
\begin{algorithm}[h!]
    \caption{Partially Smooth Proximal Gradient~(PSPG)}\label{alg:nsNesterov}
    {\small
    \begin{algorithmic}[1]
    \STATE \textbf{input:} $L_0\in\reals^{m\times n}$, $Y_0\in\reals^{m\times n}$
    \STATE $k\gets 0$
    \WHILE{$k\leq k^*$}
    %\STATE $L^k := Y^k - \frac{1}{L}\nabla f_\mu(Y^k)$
    \STATE $(L_k,S_k) \gets \argmin_{L,S} \left\{\xi\|\pi_\Omega(S)\|_1+ \left\langle \nabla f_\mu(Y_k),L\right\rangle + \frac{1}{2\mu}\|L-Y_k\|_F^2: (L,S)\in\chi\right\}$\label{algeq:pspg-subproblem}
    \STATE $t_{k+1}\gets (1+\sqrt{1+4t_k^2})/2$
    \STATE $Y_{k+1} \gets L_k+\frac{t_k-1}{t_{k+1}}(L_k-L_{k-1})$
    \STATE $k \gets k + 1$
    \ENDWHILE
    \RETURN $(L_{k^*},S_{k^*})$
    \end{algorithmic}
    }
\end{algorithm}

Mimicking the proof in \cite{Beck-Teboulle-2009}, it is easy to show that \textbf{Algorithm~\ref{alg:nsNesterov}} converges to the optimal solution of \eqref{prob:SPCP-partial-smooth}. Given $(L_0,S_0)\in\chi:=\{(L,S):~\norm{L+S-\pi_\Omega(D)}_F\leq\delta\}$, e.g., $L_0=\mathbf{0}$ and $S_0=\pi_\Omega(D)$, the current algorithm keeps all iterates in $\chi$,
%as in \textbf{Algorithm~\ref{alg:sNesterov}},
and hence it enjoys the full convergence rate of $\cO\left(\frac{\mu^{-1}}{k^2}\right)$. Thus, setting $\mu=\Omega(\epsilon)$, \textbf{Algorithm~\ref{alg:nsNesterov}} computes an $\epsilon$-optimal, feasible solution of problem \eqref{prob:SPCP-missing-equiv} in $k^*=\cO(1/\epsilon)$ iterations. However, overall complexity depends on per-iteration complexity, i.e. complexity of solving subproblem in line~\ref{algeq:pspg-subproblem} of \textbf{Algorithm~\ref{alg:nsNesterov}}. Since the solution and the complexity of this step was not analyzed before, FISTA~\cite{Beck-Teboulle-2009} has not been applied to solve \eqref{prob:SPCP-partial-smooth} previously and its overall complexity  for \eqref{prob:SPCP-partial-smooth} has been unknown.

In this paper, we show that the optimization subproblems in \textbf{Algorithm~\ref{alg:nsNesterov}} can be solved efficiently. The subproblem that has to be solved at each iteration to compute $(L_k, S_k)$ has the following generic form:
\begin{align}
\label{eq:subproblem_nsa}
(P_{ns}):\  \min\left\{\xi\norm{\pi_\Omega(S)}_1+\left\langle Q, L-\tilde{L}\right\rangle+\frac{1}{2\rho}\norm{L-\tilde{L}}_F^2:\ (L,S)\in \chi\right\},
\end{align}
for some $\rho>0$. Lemma~\ref{lem:subproblem} shows that these computations can be done efficiently.
\newpage
\begin{lemma}
\label{lem:subproblem}
The optimal solution $(L^*,S^*)$ to problem $(P_{ns})$ can be written in closed form as follows.

When $\delta>0$,
\begin{align}
&S^*=\sgn\left(\pi_\Omega\left(D-q(\tilde{L})\right)\right)\odot\max\left\{|\pi_\Omega\left(D-q(\tilde{L})\right)|-\xi\frac{(1+\rho\theta^*)}{\theta^*}~E,\ \mathbf{0}\right\}-\pi_{\Omega^c}\left(q(\tilde{L})\right), \label{lemeq:S}\\
&L^*= \pi_\Omega\left(\frac{\rho\theta^*}{1+\rho\theta^*}~(D-S^*)+\frac{1}{1+\rho\theta^*}~q(\tilde{L})\right)+\pi_{\Omega^c}\left(q(\tilde{L})\right), \label{lemeq:L}
\end{align}
where $q(\tilde{L}):=\tilde{L}-\rho~Q$, $E$ and $\mathbf{0}\in\reals^{m\times n}$ are matrices with all components equal to ones and zeros, respectively, and $\odot$ denotes the componentwise multiplication operator. $\theta^*=0$ if $\norm{\pi_\Omega(D-q(\tilde{L}))}_F\leq\delta$; otherwise, $\theta^*$ is the unique positive solution of the nonlinear equation $\phi(\theta)=\delta$, where
\begin{align}
\phi(\theta):= \left\|\min\left\{\frac{\xi}{\theta}~E,\ \frac{1}{1+\rho\theta}~|\pi_\Omega(D-q(\tilde{L}))|\right\}\right\|_F.
\end{align}
Moreover, $\theta^*$ can be efficiently computed in $\cO(|\Omega|\log(|\Omega|))$ time.

When $\delta=0$,
\begin{equation}
\label{lemeq:XS_nonsmooth_delta0}
S^*=\sgn\left(\pi_\Omega(D-q(\tilde{L}))\right)\odot\max\left\{|\pi_\Omega(D-q(\tilde{L}))|-\xi\rho~E,\ \mathbf{0}\right\}-\pi_{\Omega^c}\left(q(\tilde{L})\right),
\end{equation}
and $L^*=\pi_\Omega(D)-S^*$.
\end{lemma}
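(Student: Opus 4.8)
The plan is to attack $(P_{ns})$ by first eliminating the constraint $(L,S)\in\chi$ via a Lagrange multiplier $\theta\geq 0$ for the single convex inequality $\|L+S-\pi_\Omega(D)\|_F\leq\delta$, then solving the resulting unconstrained problem in $L$ and $S$ in closed form, and finally characterizing $\theta^*$ through the complementary slackness / KKT conditions. Since $(P_{ns})$ is a convex program with a Slater point (take $L$ arbitrary, $S=\pi_\Omega(D)-L$, which gives strict feasibility when $\delta>0$), strong duality holds and the KKT conditions are necessary and sufficient. Writing $q(\tilde L):=\tilde L-\rho Q$, the objective's $L$-part $\langle Q,L-\tilde L\rangle+\frac{1}{2\rho}\|L-\tilde L\|_F^2$ is, up to an additive constant, equal to $\frac{1}{2\rho}\|L-q(\tilde L)\|_F^2$, which is the form that makes the subsequent algebra transparent.

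First I would form the Lagrangian $\LCal(L,S;\theta)=\xi\|\pi_\Omega(S)\|_1+\frac{1}{2\rho}\|L-q(\tilde L)\|_F^2+\frac{\theta}{2}\big(\|L+S-\pi_\Omega(D)\|_F^2-\delta^2\big)$ (using the squared norm, which is equivalent for determining the optimizer and keeps things smooth), and minimize over $(L,S)$ for fixed $\theta>0$. The problem decouples over the entries $(i,j)\notin\Omega$ and those in $\Omega$. Off the support, $\pi_\Omega(S)$ does not see $S_{ij}$, so the inner minimization is a pure quadratic in $(L_{ij},S_{ij})$ whose minimizer forces $L_{ij}+S_{ij}=\pi_\Omega(D)_{ij}=0$ and $L_{ij}=q(\tilde L)_{ij}$, giving the $\pi_{\Omega^c}$ terms in \eqref{lemeq:S}--\eqref{lemeq:L}. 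On $\Omega$, for each entry one minimizes $\xi|s|+\frac{1}{2\rho}(\ell-a)^2+\frac{\theta}{2}(\ell+s-d)^2$ with $a=q(\tilde L)_{ij}$, $d=D_{ij}$; minimizing first in $\ell$ (closed form, linear) and substituting yields a one-dimensional soft-thresholding problem in $s$ whose solution is exactly the shrinkage expression in \eqref{lemeq:S} with threshold $\xi(1+\rho\theta)/\theta$, and back-substitution gives \eqref{lemeq:L}. I would then verify that the stationarity equations obtained this way are precisely the KKT stationarity conditions of $(P_{ns})$, so the candidate is optimal once $\theta$ is chosen to satisfy complementary slackness.

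Next I would pin down $\theta^*$. If the unconstrained ($\theta=0$) solution $L^*=q(\tilde L)$, $S^*=\pi_\Omega(D-q(\tilde L))-\pi_{\Omega^c}(q(\tilde L))$ already satisfies the constraint — equivalently $\|\pi_\Omega(D-q(\tilde L))\|_F\leq\delta$ — then $\theta^*=0$ is optimal by complementary slackness, which covers the $\delta=0$ degenerate-constraint discussion and the stated first case. Otherwise the constraint is active, and plugging the entrywise formulas for $L^*,S^*$ into $\|\pi_\Omega(L^*+S^*-D)\|_F=\delta$ reduces, after simplification on $\Omega$, to $\phi(\theta)=\delta$ with $\phi$ as displayed. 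I would show $\phi$ is continuous and strictly decreasing on $(0,\infty)$ with $\phi(0^+)=\|\pi_\Omega(D-q(\tilde L))\|_F>\delta$ and $\phi(\theta)\to 0$, hence a unique positive root exists; monotonicity follows because each entry of the min is nonincreasing in $\theta$ (both $\xi/\theta$ and $|\cdot|/(1+\rho\theta)$ decrease). For the complexity claim, I would observe that the $i$-th entry switches between its two branches at a breakpoint $\theta$ determined by $\xi(1+\rho\theta)=\theta|\pi_\Omega(D-q(\tilde L))|_{ij}$; sorting these $|\Omega|$ breakpoints in $\cO(|\Omega|\log|\Omega|)$ time partitions $(0,\infty)$ into intervals on which $\phi(\theta)^2$ is an explicit rational function of $\theta$, so one locates the correct interval by a scan and then solves a low-degree equation in closed form. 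The $\delta=0$ case is just the $\theta\to\infty$ limit of the active branch (equivalently, forcing $L^*+S^*=\pi_\Omega(D)$ exactly), which collapses the threshold $\xi(1+\rho\theta)/\theta$ to $\xi\rho$ and gives \eqref{lemeq:XS_nonsmooth_delta0}.

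The main obstacle I anticipate is not the per-entry soft-thresholding (routine), but the bookkeeping for $\theta^*$: establishing strict monotonicity of $\phi$ rigorously across the branch switches, handling ties among breakpoints, and confirming that on each sorted interval the equation $\phi(\theta)=\delta$ is genuinely solvable by an explicit formula (it becomes $\sum$ of terms $\xi^2/\theta^2$ and $c_{ij}/(1+\rho\theta)^2$, so clearing denominators gives a polynomial whose relevant root must be shown to be the unique one in the interval). Care is also needed at the boundary between the $\theta^*=0$ regime and the active regime to ensure the formula is continuous there, and to confirm that the Slater condition indeed holds so that a finite multiplier exists; when $\delta=0$ Slater fails and one instead argues directly that $L^*+S^*=\pi_\Omega(D)$ is forced, recovering the last display as a limiting case.
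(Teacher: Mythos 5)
Your proposal is correct and follows essentially the same route as the paper: a KKT/Lagrange-multiplier argument for the ball constraint (in squared form), closed-form soft-thresholding for $S^*$ with threshold $\xi(1+\rho\theta^*)/\theta^*$ and back-substitution for $L^*$, then locating $\theta^*$ as the unique root of the strictly decreasing $\phi$ by sorting the $|\Omega|$ breakpoints and solving an explicit low-degree (quartic) equation on the identified interval. The only cosmetic difference is that you eliminate $L$ entrywise before thresholding, whereas the paper performs the elimination on the $2\times 2$ block linear system and identifies the resulting equation with the optimality conditions of a matrix shrinkage problem; the treatment of the $\theta^*=0$ and $\delta=0$ cases coincides with the paper's.
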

\begin{proof}
See Appendix~\ref{sec:appendix2} for the proof.
\end{proof}

We establish in Lemma~\ref{lem:subproblem} that, provided with $q(\tilde{L}):=\tilde{L}-\rho Q$, the generic form subproblem $(P_{ns})$ in \eqref{eq:subproblem_nsa} can be solved efficiently. In line~\ref{algeq:pspg-subproblem} of PSPG, i.e. \textbf{Algorithm~\ref{alg:nsNesterov}}, $(L_k,S_k)$ is computed by solving $(P_{ns})$ with $\tilde{L}=Y_k$, $Q=\grad f_\mu(Y_k)$ and $\rho=\mu$. Hence, before using Lemma~\ref{lem:subproblem} to compute $(L_k,S_k)$, we have to compute $q(Y_k)=Y_k-\mu~\grad f_\mu(Y_k)$. Note that $\grad f_\mu(Y_k)=W_\mu(Y_k)=U~\Diag\left(\min\left\{\frac{\bar{\sigma}}{\mu},1\right\}\right)V^\top$, where $Y_k=U~\Diag(\bar{\sigma})V^\top$. However, computing $\grad f_\mu(Y_k)$ first and then computing $q(Y_k)$ can cause numerical problems. It is easy to check that
\begin{align}
\label{eq:qyk}
q(Y_k)=U\Diag\left(\left(\bar{\sigma}-\mu\right)_+\right)V^\top.
\end{align}
%where $\left((L)_+\right)_{ij}:=\max(L_{ij},0)$ for all $L\in\reals^{m\times n}$.
Hence, one can compute $q(Y_k)$ directly without computing $\grad f_\mu(Y_k)$ to improve numerical stability.

\section{Numerical results for SPCP}
\label{sec:numerics-SPCP}
%\red{In this paper, we discussed SPG, PSPG and ALM-S algorithms to solve the SPCP problem. Since PSPG has very few parameters that need to be tuned, we only implemented \proc{PSPG} and report the results for PSPG in our numerical experiments.}
We conducted two sets of numerical experiments using \proc{PSPG} (\textbf{Algorithm~\ref{alg:nsNesterov}}) to solve ~\eqref{prob:SPCP}, where $\xi=\frac{1}{\sqrt{\max\{m,n\}}}$. In the first set we solved randomly generated instances of the SPCP problem. In this setting, first we tested only the \proc{PSPG} algorithm to see how the run times scaled with respect to problem parameters and size; then we compared the \proc{PSPG} algorithm with an alternating direction augmented Lagrangian algorithm ASALM~\cite{Tao09_1J}. In the second set of experiments, we ran the PSPG and ASALM algorithms to extract foreground from an airport security noisy video~\cite{Li04_1J}. The MATLAB code for  \proc{PSPG} is available
at~\url{http://www2.ie.psu.edu/aybat/codes.html} and the code for ASALM
is available on request from the authors of~\cite{Tao09_1J}. All the numerical experiments were conducted on a Windows 7 machine with Intel Core i7-3520M Processor (4 MB cash, 2 cores at 2.9 GHz), and 16 GB RAM running  MATLAB 7.14 (64 bit).
\subsection{Implementation details}
Note that in each iteration of PSPG, in order to solve the subproblem in line~\ref{algeq:pspg-subproblem} of \textbf{Algorithm~\ref{alg:nsNesterov}}, one has to perform a matrix shrinkage operation to compute $q(Y_k)$ as given in \eqref{eq:qyk}, which corresponds to computing the SVD of $Y_k$ and is expensive for large matrices. However, we do not have to compute the whole SVD, as only the singular values that are larger than the smoothing parameter $\mu$ and the corresponding singular vectors are needed. As in section~\ref{sec:numerics-RPCP}, during each partial SVD, we used the modified version of PROPACK \cite{Larsen-Propack} to compute only those singular values that are greater than the threshold $\mu$ and the corresponding singular vectors. %We initialized $sv_0=\round(\min\{m,n\}/10)$. $sv_k$ is updated via \eqref{update-sv-pspg}, where $\kappa=0.04$ and $svp_k$ is the number of singular values that are larger than the smoothing parameter $\mu_k$.

Finally, when we implemented PSPG (\textbf{Algorithm~\ref{alg:nsNesterov}}), we adopted a continuation strategy on $\mu$, in which, $\mu$ is updated via \be \label{continuation-mu-pspg}\mu_{k+1} = \left\{
                                     \begin{array}{ll}
                                       \eta\mu_k, & k\leq \bar{K}; \\
                                       \mu_k, & k>\bar{K},
                                     \end{array}
                                   \right.\ee
after the subproblem in line~\ref{algeq:pspg-subproblem} of \textbf{Algorithm~\ref{alg:nsNesterov}} is solved. The strategy used in PSPG sets $\bar{K}=30$, $\eta = 2/3$ and $\mu_0 = 0.8\|D\|$. Note that the theoretical convergence properties of PSPG are not significantly affected by the continuation strategy on $\mu$ and are still valid after the first $\bar{K}$ iterations. On the other hand, in our numerical experiments we noticed that the continuation strategy significantly decreases the run time of the algorithm as the number of leading singular value computations decreases for larger values of $\mu$ -see the computation of $q(Y_k)$ in \eqref{eq:qyk} for the effect of $\mu$.
\subsection{Random Stable Principle Component Pursuit Problems}
\label{sec:rPCA_results}
\subsubsection{Experimental Setting}
We tested \proc{PSPG} on randomly generated stable principle component pursuit problems. The data matrices for these problems, $D=L^0+S^0+N^0$, were generated as follows
\begin{enumerate}[i.]
\item $L^0=UV^T$, such that $U\in\reals^{n\times r}$, $V\in\reals^{n\times r}$ for $r=c_r n$ and $U_{ij}\sim \mathcal{N}(0,1)$, $V_{ij}\sim
  \mathcal{N}(0,1)$ for all $i,j$ are independent standard Gaussian variables and $c_r\in\{0.05, 0.1\}$,
\item $\Lambda\subset\{(i,j):\ 1 \leq i,j\leq n\}$ was chosen uniformly at random such that its cardinality $|\Lambda|=p$ for $p=c_p n^2$ and $c_p\in\{0.05, 0.1\}$,
\item $S^0_{ij}\sim\mathcal{U}\left[-\sqrt{\frac{8r}{\pi}},\sqrt{\frac{8r}{\pi}}\right]$ for all $(i,j)\in\Lambda$ are independent uniform random variables, %between $-100$ and $100$,
\item $N^0_{ij}\sim \varrho~\mathcal{N}(0,1)$ for all $i,j$ are independent Gaussian variables.
\end{enumerate}
%In this section report the results of our numerical experiments. %comparing PSPG with ASALM~\cite{Tao09_1J}.
%\red{In our synthetic problems, we wanted the non-zero entries of the sparse component and the entries of the low-rank component to have the same magnitude. Indeed, for large $n$, $L_{ij}\sim \sqrt{r}~\mathcal{N}(0,1)$ for all i,j. Hence, $E[|L_{ij}|]=\sqrt{\frac{2r}{\pi}}$. Therefore, the way we created $S_{ij}$ for $(i,j)\in\Lambda$ ensures that $E[|S_{ij}|]=\sqrt{\frac{2r}{\pi}}$.}

We created 10 random problems of size $n\in\{500, 1000, 1500\}$, i.e. $D\in\reals^{n
  \times n}$, for each of the two choices of $c_r$ and $c_p$ using the procedure described above, where $\varrho$ was set such that the signal-to-noise ratio of $D$ is either $80$dB or $45$dB. The signal-to-noise ratio of $D$ is given by
  \begin{align}
  \label{eq:snr}
  \rm{SNR}(D)=10\log_{10}\left(\frac{E\left[\norm{L^0+S^0}_F^2\right]}{E\left[\norm{N^0}_F^2\right]}\right)=10\log_{10}\left(\frac{c_r n+c_s \frac{8r}{3\pi}}{\varrho^2}\right).
  \end{align}
  Hence, for a given SNR value, we selected $\varrho$ according to \eqref{eq:snr}. Table~\ref{tab:snr} displays the $\varrho$ values that we used in our experiments.
  \begin{table}[!htb]
    \begin{adjustwidth}{-2em}{-2em}
    \centering
    \caption{$\varrho$ values depending on the experimental setting}
    \renewcommand{\arraystretch}{1.1}
    {\scriptsize
    \begin{tabular}{|c|c|c|c|c|c|c|}
    \hline
    SNR&n&$\mathbf{c_r}$=\textbf{0.05} $\mathbf{c_p}$=\textbf{0.05}&$\mathbf{c_r}$=\textbf{0.05} $\mathbf{c_p}$=\textbf{0.1}&$\mathbf{c_r}$=\textbf{0.1} $\mathbf{c_p}$=\textbf{0.05}&$\mathbf{c_r}$=\textbf{0.1} $\mathbf{c_p}$=\textbf{0.1}\\\hline
    \multirow{3}{*}{$80\mathrm{dB}$}
    &$\mathbf{500}$
    & 0.5E-03 & 0.5E-03 & 0.7E-03 & 0.7E-03 \\ \cline{2-6}
    &$\mathbf{1000}$
    & 0.7E-03 & 0.7E-03 & 1.0E-03 & 1.0E-03 \\ \cline{2-6}
    &$\mathbf{1500}$
    & 0.9E-03 & 0.9E-03 & 1.3E-03 & 1.3E-03 \\ \hline
    \multirow{3}{*}{$45\mathrm{dB}$}
    &$\mathbf{500}$
    & 2.9E-02 & 2.9E-02 & 4.1E-02 & 4.1E-02 \\ \cline{2-6}
    &$\mathbf{1000}$
    & 4.1E-02 & 4.1E-02 & 5.7E-02 & 5.9E-02 \\ \cline{2-6}
    &$\mathbf{1500}$
    & 5.0E-02 & 5.1E-02 & 7.0E-02 & 7.1E-02 \\ \hline
    \end{tabular}
    \label{tab:snr}
    }
    \end{adjustwidth}
\end{table}
  %As stated in \cite{Tao09_1J}, if the Gaussian noise is a white noise with standard deviation $\varrho$, then the parameter $\delta$ in \eqref{eq:stable_component_pursuit} satisfies $\delta^2\leq (n + \sqrt{8n})\varrho^2$ with high probability. Hence, we set $\delta = \sqrt{(n + \sqrt{8n})}\varrho$ in \eqref{eq:stable_component_pursuit} in the first set of experiments.

As in \cite{Tao09_1J}, we set $\delta = \sqrt{(n + \sqrt{8n})}\varrho$ in \eqref{prob:SPCP} in the first set of experiments for both \proc{PSPG} and \proc{ASALM}.
%Our code for \proc{PSPG} was written in MATLAB 7.2 and can be found at~\url{http://www.columbia.edu/~nsa2106}. %and the code for ASALM is obtained from the authors of~\cite{Tao09_1J}. %Since code for ASALM~\cite{Tao09_1J} algorithm is not made publicly available, we implemented the algorithm in MATLAB following the description in \cite{Tao09_1J}.
We terminated PSPG when
\begin{align}
\label{eq:stopping_cond}
\frac{\norm{(L_{k+1},S_{k+1})-(L_{k},S_{k})}_F}{\norm{(L_{k},S_{k})}_F+1}\leq 0.05~\varrho,
\end{align}
where all the components of the dense noise matrix, $N^0$, are i.i.d. with standard deviation $\varrho$, i.e. $N^0_{ij}\sim \varrho\mathcal{N}(0,1)$ for all $i=1,\ldots,m$ and $j=1,\ldots,n$. On the other hand, we terminated \proc{ASALM} either when it computed a solution with better relative errors compared to the \proc{PSPG} solution for the same problem or when an iterate satisfied \eqref{eq:stopping_cond}. To be more specific, let $D=L^0+S^0+N^0$ be generated as discussed above. For a given iterate $(\tilde{L},\tilde{S})$, we denote its relative error with $\mathbf{relL}(\tilde{L})=\norm{\tilde{L}-L^0}_F/\norm{L^0}_F$ and $\mathbf{relS}(\tilde{S})=\norm{\tilde{S}-S^0}_F/\norm{S^0}_F$. Suppose that the relative errors of the low-rank and sparse components of the PSPG solution are $r_L$ and $r_S$, respectively. If $(k+1)$-th ASALM iterate, $(L_{k+1},S_{k+1})$, is the first one to satisfy the condition
\begin{align}
\label{eq:stopping_cond_2}
\mathbf{relL}(L_{k+1})\leq r_L \quad \hbox{and} \quad \mathbf{relS}(S_{k+1})\leq r_S,
\end{align}
before the condition given in \eqref{eq:stopping_cond} holds, then we chose ASALM to return $(L_k,S_k)$ to be fair in terms of run time comparison, i.e., all the run times reported for ASALM are the cpu times required for \eqref{eq:stopping_cond} or \eqref{eq:stopping_cond_2} to hold, whichever comes first. %compute $(L_k,S_k)$ if the stopping condition given in  holds before.

\begin{figure} [b!]
    \centering
    \includegraphics[scale=0.6]{./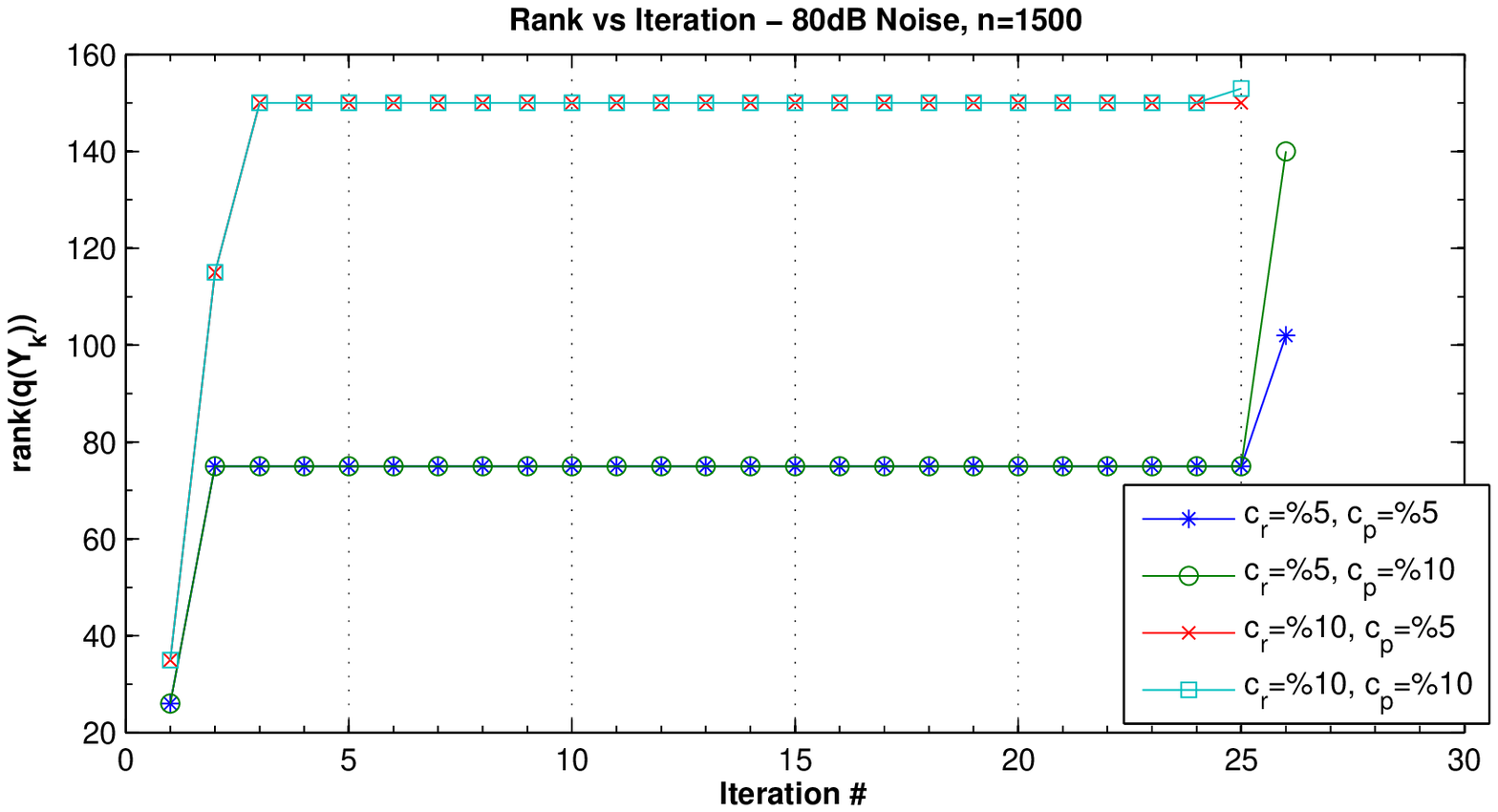}
    \includegraphics[scale=0.6]{./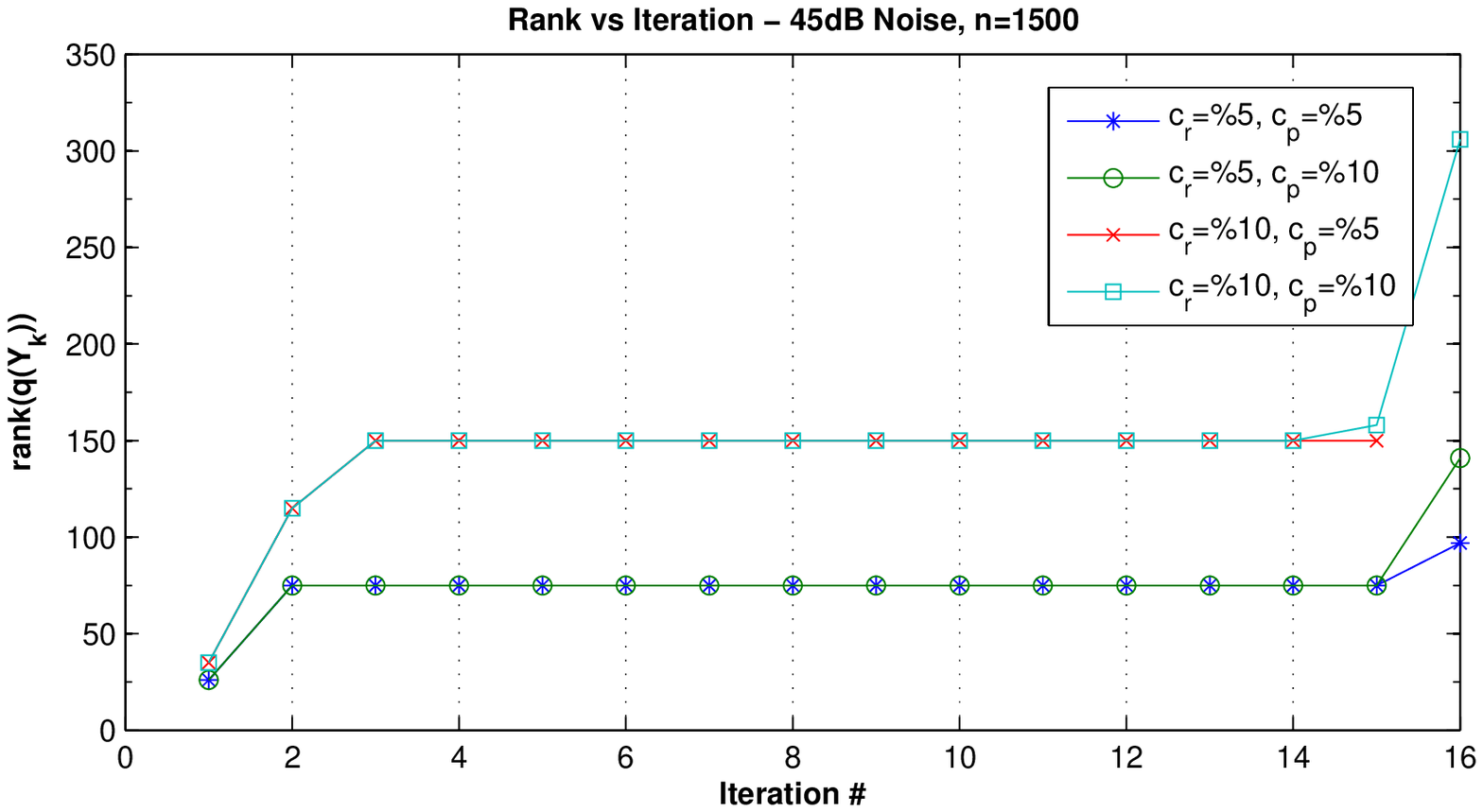}
    \caption{Change of $\rank(q(Y_k))$ during \proc{PSPG} iterations for $D\in\reals^{n\times n}$ and n=1500}
    \label{fig:rank_pspg}
\end{figure}
In each iteration of ASALM, SVD of a low-rank matrix has to be computed as in PSPG. However, the ASALM code, provided by the authors of~\cite{Tao09_1J}, does this computation very inefficiently. Therefore, in order to compare both codes on the same grounds, we used modified LANSVD function of PROPACK with threshold option in PSPG and ASALM to compute low-rank SVDs more efficiently (modified LANSVD function with threshold option can be downloaded from http://svt.stanford.edu/code.html).

%With the stopping condition given in \eqref{eq:stopping_cond}, the solutions produced by \proc{PSPG} have $\frac{\norm{L^{sol}+S^{sol}-D}_F}{\norm{D}_F}$ approximately $1.5\times 10^{-4}$ when ${\rm SNR}(D)=80dB$ and $5\times 10^{-3}$ when ${\rm SNR}(D)=45dB$, regardless of the problem dimension $n$ and the problem parameters related to the rank and sparsity of $D$, i.e. $c_r$ and $c_p$. After thresholding the singular values of $L^{sol}$ that were less than $1\times 10^{-12}$, \proc{PSPG} found the true rank in all 120 random problems solved when ${\rm SNR}(D)=80dB$, and it found the true rank for 113 out of 120 problems when ${\rm SNR}(D)=45dB$, while for 6 of the remaining problems $\rank(L^{sol})$ is off from $\rank(L^0)$ only by 1.

\subsubsection{Numerical results}
As discussed earlier, solving subproblem in line~\ref{algeq:pspg-subproblem} of \proc{PSPG} (\textbf{Algorithm~\ref{alg:nsNesterov}}) is the only bottleneck operation and according to Lemma~\ref{lem:subproblem}, it can be computed very efficiently when $q(Y_k)$ is given. Hence, effectiveness of  \proc{PSPG} depends on the rank of $q(Y_k)$ to be small -- see \eqref{eq:qyk}. It can be seen in Figure~\ref{fig:rank_pspg} that $\rank(q(Y_k))\leq\rank(L^0)$ except in the last one or two iterations and note that $L^0$ is a low-rank matrix. As shown in Figure~\ref{fig:rank_pspg} for both $45$dB and $80$dB, $\rank(q(Y_k))$ increases and stabilizes exactly at $\rank(L^0)$. This behavior plotted in the figure is true for all test problems we solved.
\begin{table}[!htb]
    \begin{adjustwidth}{-2em}{-2em}
    \centering
    \caption{PSPG: Solution time for decomposing $D\in\reals^{n\times n}$, $n\in\{500, 1000, 1500\}$}
    \renewcommand{\arraystretch}{1.3}
    {\scriptsize %\footnotesize
    \begin{tabular}{ccc|c|c|c|c|}
    \cline{4-7}
    &&&$\mathbf{c_r}$=\textbf{0.05} $\mathbf{c_p}$=\textbf{0.05}&$\mathbf{c_r}$=\textbf{0.05} $\mathbf{c_p}$=\textbf{0.1}&$\mathbf{c_r}$=\textbf{0.1} $\mathbf{c_p}$=\textbf{0.05}&$\mathbf{c_r}$=\textbf{0.1} $\mathbf{c_p}$=\textbf{0.1}\\\hline
    \multicolumn{1}{|c|}{SNR}&\multicolumn{1}{|c|}{n}&Field& \textbf{avg}~/~max & \textbf{avg}~/~max & \textbf{avg}~/~max & \textbf{avg}~/~max\\ \hline
    \multicolumn{1}{|c|}{\multirow{9}{*}{$80\mathrm{dB}$}}
    &\multicolumn{1}{|c|}{\multirow{3}{*}{$\mathbf{500}$}}
    & $\mathbf{svd}$ & 28~/~28&28~/~28&26~/~26&27~/~27 \\
    \multicolumn{1}{|c|}{}&\multicolumn{1}{|c|}{}& $\mathbf{lsv}$ & 1152.6~/~1163.0&1183.0~/~1191.0&1482.8~/~1502.0&1656.1~/~1680.0\\
    \multicolumn{1}{|c|}{}&\multicolumn{1}{|c|}{}& $\mathbf{cpu}$ &
    4.7~/~4.8&5.4~/~5.5&4.0~/~4.2&5.2~/~5.3 \\ \cline{2-7}
    \multicolumn{1}{|c|}{}
    &\multicolumn{1}{|c|}{\multirow{3}{*}{$\mathbf{1000}$}}
    & $\mathbf{svd}$ & 26.3~/~27&27~/~27&25~/~25&26~/~26 \\
    \multicolumn{1}{|c|}{}&\multicolumn{1}{|c|}{}& $\mathbf{lsv}$ & 1557.0~/~1714.0&1752.0~/~1778.0&2525.1~/~2530.0&2730.7~/~2738.0\\
    \multicolumn{1}{|c|}{}&\multicolumn{1}{|c|}{}& $\mathbf{cpu}$ & 14.4~/~16.5&18.5~/~18.8&13.6~/~13.8&17.6~/~17.8\\ \cline{2-7}
    \multicolumn{1}{|c|}{}
    &\multicolumn{1}{|c|}{\multirow{3}{*}{$\mathbf{1500}$}}
    & $\mathbf{svd}$ & 26~/~26&27~/~27&25~/~25&25~/~25\\
    \multicolumn{1}{|c|}{}&\multicolumn{1}{|c|}{}& $\mathbf{lsv}$ & 2084.8~/~2094.0&2474.4~/~2492.0&3685.1~/~3695.0&3690.3~/~3703.0\\
    \multicolumn{1}{|c|}{}&\multicolumn{1}{|c|}{}& $\mathbf{cpu}$ & 28.7~/~32.8&41.3~/~45.2&34.0~/~34.3&38.2~/~38.6\\ \hline
    \multicolumn{1}{|c|}{\multirow{9}{*}{$45\mathrm{dB}$}}
    &\multicolumn{1}{|c|}{\multirow{3}{*}{$\mathbf{500}$}}
    & $\mathbf{svd}$ & 18~/~18&18~/~18&16~/~16&17~/~17 \\
    \multicolumn{1}{|c|}{}&\multicolumn{1}{|c|}{}& $\mathbf{lsv}$ & 806.0~/~819.0&839.8~/~852.0&901.8~/~922.0&1080.5~/~1102.0\\
    \multicolumn{1}{|c|}{}&\multicolumn{1}{|c|}{}& $\mathbf{cpu}$ &
    3.1~/~3.3&3.5~/~3.6&2.4~/~2.6&3.3~/~3.4 \\ \cline{2-7}
    \multicolumn{1}{|c|}{}
    &\multicolumn{1}{|c|}{\multirow{3}{*}{$\mathbf{1000}$}}
    & $\mathbf{svd}$ & 16.9~/~17&17~/~17&15~/~15&16~/~16 \\
    \multicolumn{1}{|c|}{}&\multicolumn{1}{|c|}{}& $\mathbf{lsv}$ & 1100.8~/~1146.0&1172.5~/~1198.0&1475.0~/~1480.0&1685.7~/~1702.0\\
    \multicolumn{1}{|c|}{}&\multicolumn{1}{|c|}{}& $\mathbf{cpu}$ &
    10.0~/~10.6&11.6~/~11.8&7.9~/~8.1&11.0~/~11.4\\ \cline{2-7}
    \multicolumn{1}{|c|}{}
    &\multicolumn{1}{|c|}{\multirow{3}{*}{$\mathbf{1500}$}}
    & $\mathbf{svd}$ & 16~/~16&17~/~17&15~/~15&15.6~/~16\\
    \multicolumn{1}{|c|}{}&\multicolumn{1}{|c|}{}& $\mathbf{lsv}$ & 1267.8~/~1278.0&1665.4~/~1678.0&2154.9~/~2165.0&2352.5~/~2485.0\\
    \multicolumn{1}{|c|}{}&\multicolumn{1}{|c|}{}& $\mathbf{cpu}$ & 16.6~/~17.1&26.1~/~26.9&19.8~/~20.0&24.9~/~27.0\\ \hline
    \end{tabular}
    \label{tab:self_time}
    }
    \end{adjustwidth}
\end{table}

Tables~\ref{tab:self_time} and \ref{tab:self_quality} show the results of our experiments to determine how PSPG performs as the parameters of the SPCP problem change. In Table~\ref{tab:self_time}, the average and maximum values for the statistics $\mathbf{cpu}$, $\mathbf{svd}$ and $\mathbf{lsv}$, which are taken over the $10$ random instances, are given for each choice of $n$, $c_r$ and $c_p$ values. For a given instance, $\mathbf{cpu}$ denotes the running time of \proc{PSPG} in
\emph{seconds}, $\mathbf{svd}$ denotes the number of partial SVDs computed during the run time and $\mathbf{lsv}$ denotes the total number of leading singular values and corresponding singular vectors computed, i.e., total number of singular values computed in all the partial SVDs during the run time. %divided by the number of partial SVD computations.
%The column labeled {\tt min/\tt avg/\tt max} lists
%Table~\ref{tab:self_detail_80dB} and Table~\ref{tab:self_detail_45dB} in the appendix list additional error statistics.
Table~\ref{tab:self_time} shows that the number of partial SVDs was almost constant regardless of the problem dimension $n$ and the problem parameters related to the rank and sparsity of $D$, i.e., $c_r$ and $c_p$. Moreover, Table~\ref{tab:self_quality} shows that the relative error of the solution $(L^{sol},S^{sol})$ was also almost constant for different $n$, $c_r$ and $c_p$ values.

\begin{table}[!htb]
    \begin{adjustwidth}{-2em}{-2em}
    \centering
    \caption{PSPG: Solution accuracy for decomposing $D\in\reals^{n\times n}$, $n\in\{500, 1000, 1500\}$}
    \renewcommand{\arraystretch}{1.5}
    {\scriptsize%\footnotesize
    \begin{tabular}{ccc|c|c|c|c|}
    \cline{4-7}
    &&&$\mathbf{c_r}$=\textbf{0.05} $\mathbf{c_p}$=\textbf{0.05}&$\mathbf{c_r}$=\textbf{0.05} $\mathbf{c_p}$=\textbf{0.1}&$\mathbf{c_r}$=\textbf{0.1} $\mathbf{c_p}$=\textbf{0.05}&$\mathbf{c_r}$=\textbf{0.1} $\mathbf{c_p}$=\textbf{0.1}\\\hline
    \multicolumn{1}{|c|}{SNR}&\multicolumn{1}{|c|}{n}&Relative Error& \textbf{avg}~/~max & \textbf{avg}~/~max & \textbf{avg}~/~max & \textbf{avg}~/~max\\ \hline
    \multicolumn{1}{|c|}{\multirow{6}{*}{$80\mathrm{dB}$}}
    &\multicolumn{1}{|c|}{\multirow{2}{*}{$\mathbf{500}$}}
    & \textbf{relL} & 8.0E-05~/~8.0E-05&8.5E-05~/~8.5E-05&9.8E-05~/~1.0E-04&9.7E-05~/~9.9E-05 \\
    \multicolumn{1}{|c|}{}&\multicolumn{1}{|c|}{}& \textbf{relS} & 2.6E-04~/~2.7E-04&2.3E-04~/~2.3E-04&2.9E-04~/~2.9E-04&2.6E-04~/~2.7E-04\\ \cline{2-7}
    \multicolumn{1}{|c|}{}
    &\multicolumn{1}{|c|}{\multirow{2}{*}{$\mathbf{1000}$}}
    & \textbf{relL} & 9.9E-05~/~1.0E-04&9.7E-05~/~9.8E-05&1.1E-04~/~1.2E-04&1.1E-04~/~1.1E-04 \\
    \multicolumn{1}{|c|}{}&\multicolumn{1}{|c|}{}& \textbf{relS} & 2.7E-04~/~2.8E-04&2.3E-04~/~2.4E-04&3.5E-04~/~3.6E-04&2.8E-04~/~2.9E-04 \\ \cline{2-7}
    \multicolumn{1}{|c|}{}
    &\multicolumn{1}{|c|}{\multirow{2}{*}{$\mathbf{1500}$}}
    & \textbf{relL} & 1.0E-04~/~1.0E-04&9.7E-05~/~9.8E-05&1.1E-04~/~1.1E-04&1.3E-04~/~1.3E-04 \\
    \multicolumn{1}{|c|}{}&\multicolumn{1}{|c|}{}& \textbf{relS} & 2.7E-04~/~2.8E-04&2.3E-04~/~2.4E-04&3.5E-04~/~3.5E-04&3.5E-04~/~3.6E-04 \\ \hline
    \multicolumn{1}{|c|}{\multirow{6}{*}{$45\mathrm{dB}$}}
    &\multicolumn{1}{|c|}{\multirow{2}{*}{$\mathbf{500}$}}
    & \textbf{relL} & 4.5E-03~/~4.6E-03&4.8E-03~/~4.9E-03&5.6E-03~/~5.7E-03&5.5E-03~/~5.6E-03 \\
    \multicolumn{1}{|c|}{}&\multicolumn{1}{|c|}{}& \textbf{relS} & 1.5E-02~/~1.5E-02&1.3E-02~/~1.3E-02&1.7E-02~/~1.7E-02&1.5E-02~/~1.5E-02 \\ \cline{2-7}
    \multicolumn{1}{|c|}{}
    &\multicolumn{1}{|c|}{\multirow{2}{*}{$\mathbf{1000}$}}
    & \textbf{relL} & 5.2E-03~/~5.8E-03&5.5E-03~/~5.6E-03&6.5E-03~/~6.6E-03&6.3E-03~/~6.4E-03 \\
    \multicolumn{1}{|c|}{}&\multicolumn{1}{|c|}{}& \textbf{relS} & 1.4E-02~/~1.5E-02&1.3E-02~/~1.3E-02&2.0E-02~/~2.1E-02&1.6E-02~/~1.6E-02 \\ \cline{2-7}
    \multicolumn{1}{|c|}{}
    &\multicolumn{1}{|c|}{\multirow{2}{*}{$\mathbf{1500}$}}
    & \textbf{relL} & 5.9E-03~/~5.9E-03&5.5E-03~/~5.6E-03&6.5E-03~/~6.5E-03&6.8E-03~/~7.5E-03 \\
    \multicolumn{1}{|c|}{}&\multicolumn{1}{|c|}{}& \textbf{relS} & 1.6E-02~/~1.6E-02&1.3E-02~/~1.3E-02&2.0E-02~/~2.0E-02&1.8E-02~/~2.0E-02 \\ \hline
    \end{tabular}
    \label{tab:self_quality}
    }
    \end{adjustwidth}
\end{table}

Next, we compared PSPG with ASALM~\cite{Tao09_1J} for a fixed problem size, i.e. $n=1500$ where $D\in\reals^{n\times n}$. The code for \proc{ASALM} was obtained from the authors of~\cite{Tao09_1J}. The comparison results are displayed in Table~\ref{tab:compare_time} and Table~\ref{tab:compare_quality}. The statistics displayed in Table~\ref{tab:compare_time} and Table~\ref{tab:compare_quality}, i.e. \textbf{cpu}, \textbf{svd}, \textbf{lsv}, \textbf{relL} and \textbf{relS}, are defined at the beginning of this section.
\newpage
 Table~\ref{tab:compare_time} shows that for all of the problem classes, the number of partial SVDs required by \proc{PSPG} was slightly better than the number required by \proc{ASALM}. On the other hand, there was a big difference in CPU times; this difference can be explained by the fact that \proc{ASALM} required more leading singular value computations than \proc{PSPG} did per partial SVD. Table~\ref{tab:compare_quality} shows that the relative errors of the low-rank and sparse components produced by PSPG and ASALM are almost the same.
%Finally, in Figure~\ref{fig:nsa_45dB}, we plot the decomposition of $D=L^0+S^0+N^0\in\reals^{n\times n}$ generated by \proc{PSPG}, where $\rank(L^0)=75$, $\norm{S^0}_0=112,500$ and ${\rm SNR}(D)=45$. In the first row, we plot randomly selected 1500 components of $S^0$ and 100 leading singular values of $L^0$ in the first row. In the second row, we plot the same components of $S^{sol}$ and 100 singular of $L^{sol}$ produced by \proc{PSPG}. In the third row, we plot the absolute errors of $S^{sol}$ and $L^{sol}$. Note that the scales of the graphs showing absolute errors of $S^{sol}$ and $L^{sol}$ are larger than those of $S^0$ and $L^0$. And in the fourth row, we plot the same 1500 random components of $N^0$. When we compare the absolute error graphs of $S^{sol}$ and $L^{sol}$ with the graph showing $N^0$, we can confirm that the solution produced by \proc{PSPG} is inline with Theorem~\ref{thm:candes2}.

\begin{table}[!htb]
    \begin{adjustwidth}{-2em}{-2em}
    \centering
    \caption{PSPG vs ASALM: Solution time for decomposing $D\in\reals^{n\times n}$, $n=1500$}
    \renewcommand{\arraystretch}{1.3}
    {\scriptsize %\footnotesize
    \begin{tabular}{ccc|c|c|c|c|}
    \cline{4-7}
    &&&$\mathbf{c_r}$=\textbf{0.05} $\mathbf{c_p}$=\textbf{0.05}&$\mathbf{c_r}$=\textbf{0.05} $\mathbf{c_p}$=\textbf{0.1}&$\mathbf{c_r}$=\textbf{0.1} $\mathbf{c_p}$=\textbf{0.05}&$\mathbf{c_r}$=\textbf{0.1} $\mathbf{c_p}$=\textbf{0.1}\\\hline
    \multicolumn{1}{|c|}{SNR}&\multicolumn{1}{|c|}{Alg.}& Field & \textbf{avg}~/~max & \textbf{avg}~/~max & \textbf{avg}~/~max & \textbf{avg}~/~max\\ \hline
    \multicolumn{1}{|c|}{\multirow{6}{*}{$80\mathrm{dB}$}}
    &\multicolumn{1}{|c|}{\multirow{3}{*}{$\mathbf{PSPG}$}}
    & $\mathbf{svd}$ & 26~/~26&27~/~27&25~/~25&25~/~25 \\
    \multicolumn{1}{|c|}{}&\multicolumn{1}{|c|}{}& $\mathbf{lsv}$ & 2084.8~/~2094.0&2474.4~/~2492.0&3685.1~/~3695.0&3690.3~/~3703.0\\
    \multicolumn{1}{|c|}{}&\multicolumn{1}{|c|}{}& $\mathbf{cpu}$ & 28.7~/~32.8&41.3~/~45.2&34.0~/~34.3&38.2~/~38.6\\ \cline{2-7}
    \multicolumn{1}{|c|}{}
    &\multicolumn{1}{|c|}{\multirow{3}{*}{$\mathbf{ASALM}$}}
    & $\mathbf{svd}$ & 29.2~/~30&32.5~/~33&29.8~/~30&35.3~/~36 \\
    \multicolumn{1}{|c|}{}&\multicolumn{1}{|c|}{}& $\mathbf{lsv}$ & 3577.0~/~3643.0&5763.9~/~5809.0&6017.8~/~6059.0&9544.3~/~9651.0\\
    \multicolumn{1}{|c|}{}&\multicolumn{1}{|c|}{}& $\mathbf{cpu}$ & 45.8~/~47.5&77.9~/~80.1&63.1~/~64.4&106.8~/~108.4\\ \hline
    \multicolumn{1}{|c|}{\multirow{6}{*}{$45\mathrm{dB}$}}
    &\multicolumn{1}{|c|}{\multirow{3}{*}{$\mathbf{PSPG}$}}
    & $\mathbf{svd}$ & 16~/~16&17~/~17&15~/~15&15.6~/~16 \\
    \multicolumn{1}{|c|}{}&\multicolumn{1}{|c|}{}& $\mathbf{lsv}$ & 1267.8~/~1278.0&1665.4~/~1678.0&2154.9~/~2165.0&2352.5~/~2485.0\\
    \multicolumn{1}{|c|}{}&\multicolumn{1}{|c|}{}& $\mathbf{cpu}$ & 16.6~/~17.1&26.1~/~26.9&19.8~/~20.0&24.9~/~27.0\\ \cline{2-7}
    \multicolumn{1}{|c|}{}
    &\multicolumn{1}{|c|}{\multirow{3}{*}{$\mathbf{ASALM}$}}
    & $\mathbf{svd}$ & 13.6~/~14&20.9~/~21&15~/~15&22.6~/~23 \\
    \multicolumn{1}{|c|}{}&\multicolumn{1}{|c|}{}& $\mathbf{lsv}$ & 2522.0~/~2597.0&5555.8~/~5607.0&3839.2~/~3855.0&7857.2~/~7979.0\\
    \multicolumn{1}{|c|}{}&\multicolumn{1}{|c|}{}& $\mathbf{cpu}$ & 33.0~/~34.5&76.2~/~79.6&44.1~/~44.5&93.5~/~96.0\\ \hline
    \end{tabular}
    \label{tab:compare_time}
    }
    \end{adjustwidth}
\end{table}
\begin{table}[!htb]
    \begin{adjustwidth}{-2em}{-2em}
    \centering
    \caption{PSPG vs ASALM: Solution accuracy for decomposing $D\in\reals^{n\times n}$, $n=1500$}
    \renewcommand{\arraystretch}{1.5}
    {\scriptsize %\footnotesize
    \begin{tabular}{ccc|c|c|c|c|}
    \cline{4-7}
    &&&$\mathbf{c_r}$=\textbf{0.05} $\mathbf{c_p}$=\textbf{0.05}&$\mathbf{c_r}$=\textbf{0.05} $\mathbf{c_p}$=\textbf{0.1}&$\mathbf{c_r}$=\textbf{0.1} $\mathbf{c_p}$=\textbf{0.05}&$\mathbf{c_r}$=\textbf{0.1} $\mathbf{c_p}$=\textbf{0.1}\\\hline
    \multicolumn{1}{|c|}{SNR}&\multicolumn{1}{|c|}{Alg.}&Relative Error& \textbf{avg}~/~max & \textbf{avg}~/~max & \textbf{avg}~/~max & \textbf{avg}~/~max\\ \hline
    \multicolumn{1}{|c|}{\multirow{4}{*}{$80\mathrm{dB}$}}
    &\multicolumn{1}{|c|}{\multirow{2}{*}{$\mathbf{PSPG}$}}
    & \textbf{relL}
    &1.0E-04~/~1.0E-04&9.7E-05~/~9.8E-05&1.1E-04~/~1.1E-04&1.3E-04~/~1.3E-04\\
    \multicolumn{1}{|c|}{}&\multicolumn{1}{|c|}{}& \textbf{relS}
    &2.7E-04~/~2.8E-04&2.3E-04~/~2.4E-04&3.5E-04~/~3.5E-04&3.5E-04~/~3.6E-04\\ \cline{2-7}
    \multicolumn{1}{|c|}{}
    &\multicolumn{1}{|c|}{\multirow{2}{*}{$\mathbf{ASALM}$}}
    & \textbf{relL}
    &4.5E-05~/~4.7E-05&4.7E-05~/~4.9E-05&7.6E-05~/~7.8E-05&7.8E-05~/~8.0E-05\\
    \multicolumn{1}{|c|}{}&\multicolumn{1}{|c|}{}& \textbf{relS}
    &4.6E-04~/~5.0E-04&3.3E-04~/~3.5E-04&6.4E-04~/~6.7E-04&4.4E-04~/~4.6E-04\\ \hline
    \multicolumn{1}{|c|}{\multirow{4}{*}{$45\mathrm{dB}$}}
    &\multicolumn{1}{|c|}{\multirow{2}{*}{$\mathbf{PSPG}$}}
    & \textbf{relL}
    &5.9E-03~/~5.9E-03&5.5E-03~/~5.6E-03&6.5E-03~/~6.5E-03&6.8E-03~/~7.5E-03\\
    \multicolumn{1}{|c|}{}&\multicolumn{1}{|c|}{}& \textbf{relS}
    &1.6E-02~/~1.6E-02&1.3E-02~/~1.3E-02&2.0E-02~/~2.0E-02&1.8E-02~/~2.0E-02\\ \cline{2-7}
    \multicolumn{1}{|c|}{}
    &\multicolumn{1}{|c|}{\multirow{2}{*}{$\mathbf{ASALM}$}}
    & \textbf{relL}
    &2.5E-03~/~2.9E-03&2.9E-03~/~3.0E-03&4.2E-03~/~4.3E-03&4.3E-03~/~5.2E-03\\
    \multicolumn{1}{|c|}{}&\multicolumn{1}{|c|}{}& \textbf{relS}
    &2.0E-02~/~2.8E-02&1.3E-02~/~1.4E-02&3.0E-02~/~3.1E-02&2.0E-02~/~2.6E-02\\ \hline
    \end{tabular}
    \label{tab:compare_quality}
    }
    \end{adjustwidth}
\end{table}
%\begin{figure} [!h]
%\centering
%\caption{PSPG: Comparison of randomly selected 1500 components of $N^0$ with absolute errors of those components in $S^{sol}$ and $\sigma(L^{sol})$. $D\in\reals^{n\times n}$, $n=1500$, ${\rm SNR}(D)=45dB$}
%\hspace{-0.5cm}
%\includegraphics[scale=0.38]{45dB.png}
%\label{fig:nsa_45dB}
%\end{figure}

\subsection{Foreground Detection on a Noisy Video}
\label{sec:video_test_results}
As described in section~\ref{sec:video_test_results-RPCA}, foreground extraction from a noisy video can be formulated as SPCP problem. We used \proc{PSPG} and \proc{ASALM} to extract moving objects in a surveillance  video~\cite{Li04_1J}, which is a sequence of $201$
grayscale $144 \times 176$  frames. We represented this information as
a data matrix $D \in \reals^{(144\times 176)\times 201}$, where the
$i$-th column of $D$ was constructed by stacking the
columns of the $i$-th frame into a long vector.
We assumed that the original airport
security video was noiseless and created a noisy video sequence with
$\proc{SNR}=20$dB as follows. We set $\varrho = \norm{D}_F/(\sqrt{144\times 176\times
  201}~10^{\proc{SNR}/20})$ and then added to each component $D_{ij}$
of the data matrix an independent sample from a Normal distribution
with mean zero and variance $\varrho^2$.

We solved the corresponding  SPCP problem using PSPG and ASALM.
We set  $i$-th background  frame to be  $i$-th
column of the recovered low-rank matrix $L$, and the $i$-th foreground
frame to be the $i$-th column of the sparse matrix $S$. PSPG and ASALM were terminated when
$\frac{\norm{(L_{k+1},S_{k+1})-(L_{k},S_{k})}_F}{(\norm{(L_{k},S_{k})}_F+1)\varrho}$ is less than
$5\times10^{-4}$ and $1\times10^{-4}$, respectively, in order to obtain similar visual quality in reconstruction.
\begin{figure} [t!]
    \centering
    \mbox{\hspace{4mm}$D(t)$:}
    \includegraphics[scale=0.6]{./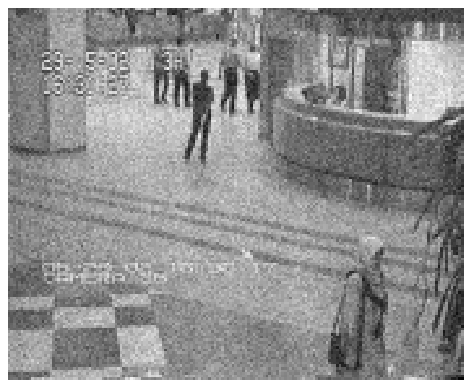}
    \includegraphics[scale=0.6]{./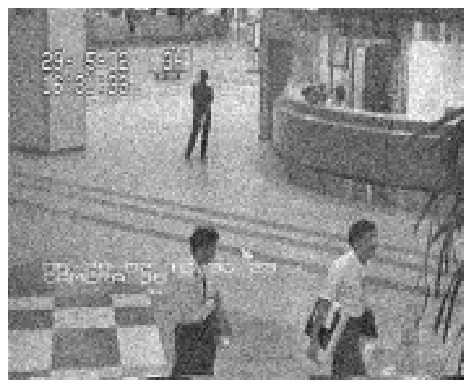}
    \includegraphics[scale=0.6]{./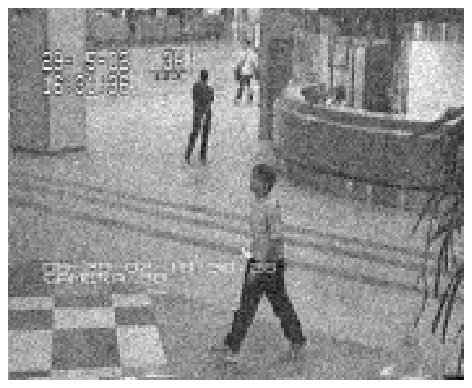}\\
    \mbox{$L^{sol}(t)$:}
    \includegraphics[scale=0.6]{./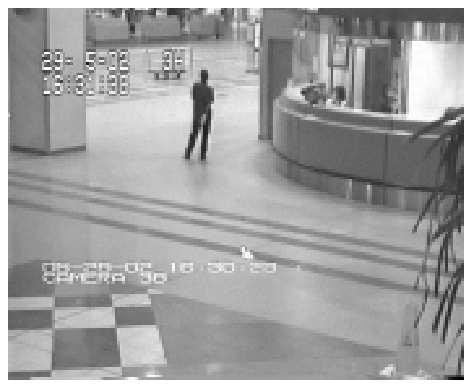}
    \includegraphics[scale=0.6]{./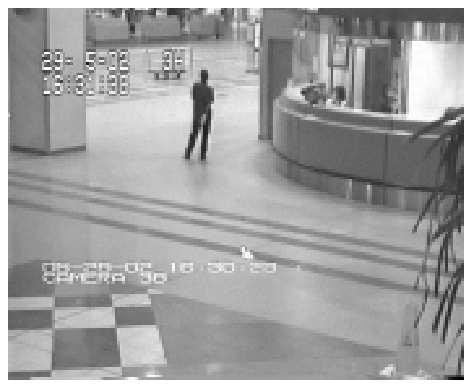}
    \includegraphics[scale=0.6]{./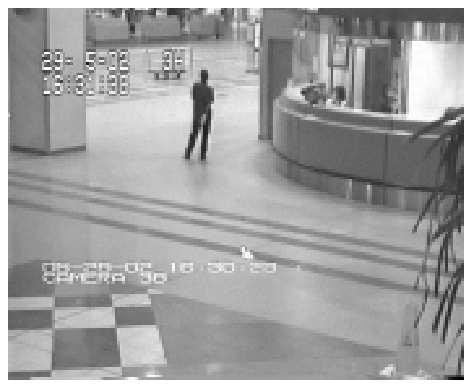}\\
    \mbox{$S^{sol}(t)$: }
    \includegraphics[scale=0.6]{./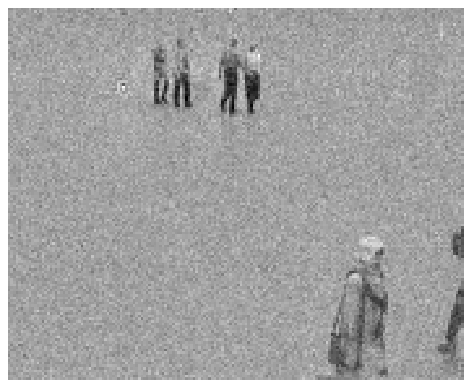}
    \includegraphics[scale=0.6]{./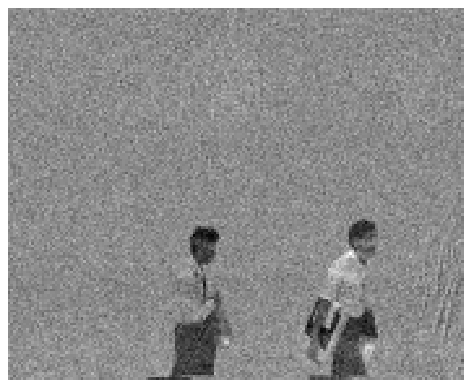}
    \includegraphics[scale=0.6]{./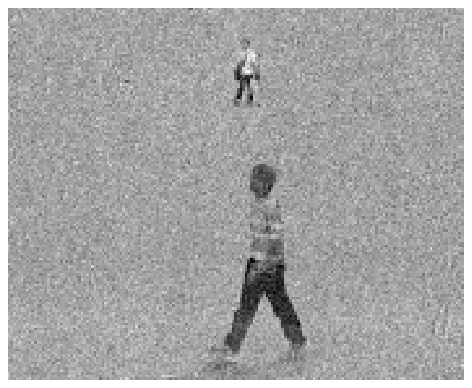}\\
    \mbox{$S_{post}^{sol}(t)$: }
    \includegraphics[scale=0.6]{./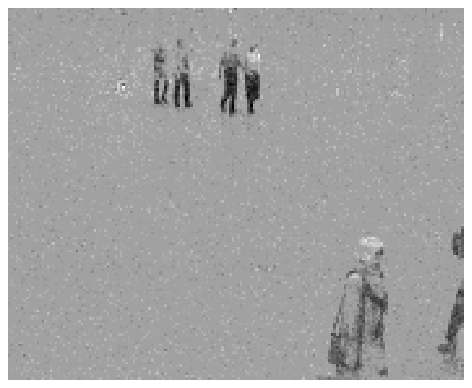}
    \includegraphics[scale=0.6]{./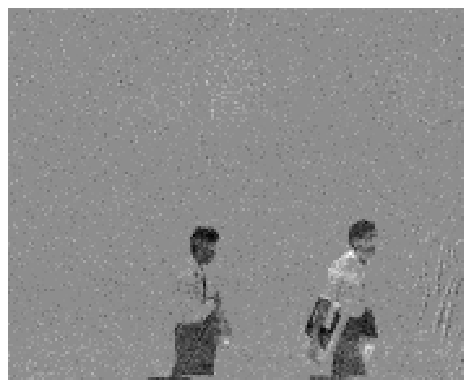}
    \includegraphics[scale=0.6]{./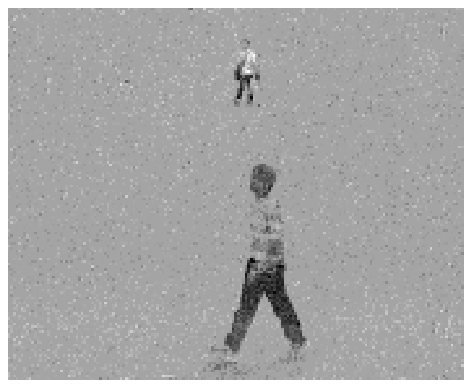}
    \caption{Background extraction from a video with $20$dB SNR using \proc{PSPG}}
    \label{fig:noisy_reconstruction_test_pspg}
\end{figure}
The recovery statistics for each algorithm are displayed in
Table~\ref{tab:compare_video}. $(L^{sol},S^{sol})$ denotes the
variables corresponding to the low-rank and sparse components of $D$,
respectively, when the algorithm of interest
terminates. The first row in
Figure~\ref{fig:noisy_reconstruction_test_pspg} displays the
$35$-th, $100$-th and $125$-th frames of the noisy surveillance
video~\cite{Li04_1J}. The second and third
rows display the recovered background and foreground images of the
selected frames, respectively, using PSPG. The frames recovered by
ASALM were very similar to those of PSPG.  Even though the visual
quality of recovered background and foreground are very similar for
both algorithms, Table~\ref{tab:compare_video} shows that both the
number of partial SVDs and the CPU time of PSPG are significantly smaller
than those of \proc{ASALM}.

\begin{table}[!htb]
    \begin{adjustwidth}{-2em}{-2em}
    \centering
    \caption{PSPG vs ASALM: Recovery statistics for foreground detection on a noisy video}
    \renewcommand{\arraystretch}{1.75}
    {\scriptsize %\footnotesize
    \begin{tabular}{|c|c|c|c|c|c|c|}
    \hline
    Alg.& \textbf{svd} & \textbf{lsv} & \textbf{cpu}& $\mathbf{\norm{L^{sol}}_*}$ & $\mathbf{\norm{S^{sol}}_1}$ & $\mathbf{\rank(L^{sol})}$ \\ \hline
    $\mathbf{PSPG}$ & 18 & 296 &54.4 & $\approx3.5\times 10^5$ & $\approx 7.1\times 10^7$ & 1 \\ \hline
    $\mathbf{ASALM}$ &  56 & 2232 & 93.1 & $\approx4.0\times 10^5$ & $\approx7.6\times 10^7$ & 84 \\ \hline
    \end{tabular}
    \label{tab:compare_video}
    }
    \end{adjustwidth}
\end{table}

In our preliminary numerical experiments, we noticed that the
recovered background frames are almost noise free even when the input
video was very noisy, and almost all the
noise shows up in the recovered foreground images. This was
observed for both PSPG and ASALM. Hence, in order to
eliminate the noise seen in the recovered foreground frames and
enhance the quality of the recovered frames, we post-process
$(L^{sol},S^{sol})$ of PSPG as follows:
\begin{align}
\label{eq:postprocess}
S_{post}^{sol}:=\argmin_S\{\norm{S}_1:~\norm{S+L^{sol}-D}_F\leq\delta\}.
\end{align}
The fourth row of Figure~\ref{fig:noisy_reconstruction_test_pspg} shows
the post-processed foreground frames and $\norm{S_{post}^{sol}}_1\approx 3.2\times 10^7$.
\section{Conclusion}
In this paper, we proposed proximal gradient and alternating linearization methods for solving robust and stable PCA problems. We proved that $O(1/\epsilon)$ iterations are required to obtain an $\epsilon$-optimal solution to the nonsmooth RPCP and SPCP problems. Numerical results on problems with arising from background separation from surveillance video, shadow and specularity removal from face images and video denoising from heavily corrupted data are reported. The results show that our methods are able to solve huge problems involving million variables and linear constraints effectively.
\bibliographystyle{siam}
\bibliography{All}
\appendix

\section{Proof of Lemma~\ref{eq:subproblem_nsa}}
\label{sec:appendix2}
\begin{proof}
Suppose that $\delta>0$. Let $(L^*,S^*)$ be an optimal solution to problem $(P_{ns})$ %~\eqref{eq:subproblem}
and $\theta^*$ denote the optimal Lagrangian multiplier for the constraint $(L,S)\in\chi$ written as

\begin{align}
\frac{1}{2}\norm{L+S-\pi_\Omega(D)}^2_F\leq \frac{\delta^2}{2}. \label{eq:quadratic_constraint}
\end{align}
Then the KKT optimality conditions for this problem are
\begin{enumerate}[i.]
    \item $Q+\rho^{-1}(L^*-\tilde{L})+\theta^*(L^*+S^*-\pi_\Omega(D))=0$, \label{condition1}
    \item $\xi G + \theta^*(L^*+S^*-\pi_\Omega(D))=0$ and $G\in\pi_\Omega\left(\partial\norm{\pi_\Omega(S^*)}_1\right)$, \label{condition2}
    \item $\norm{L^*+S^*-\pi_\Omega(D)}_F\leq\delta$, \label{condition3}
    \item $\theta^*\geq 0$, \label{condition4}
    \item $\theta^*~(\norm{L^*+S^*-\pi_\Omega(D)}_F-\delta)=0$, \label{condition5}
\end{enumerate}
where \ref{condition2} follows from $\pi^*_\Omega=\pi_\Omega$. From \ref{condition1} and \ref{condition2}, we have
\begin{eqnarray}
\left[
  \begin{array}{cc}
    (\rho^{-1}+\theta^*)I &  \theta^*I\\
    \theta^*I & \theta^*I \\
  \end{array}
\right]
\left[
  \begin{array}{c}
    L^* \\
    S^* \\
  \end{array}
\right]
=
\left[
  \begin{array}{c}
    \theta^*\pi_\Omega(D)+\rho^{-1}~q(\tilde{L}) \\
    \theta^*\pi_\Omega(D)-\xi G\\
  \end{array}
\right], \label{eq:FTOC_1}
\end{eqnarray}
where $q(\tilde{L})=\tilde{L}-\rho~Q$. From \eqref{eq:FTOC_1} it follows that
\begin{eqnarray}
\left[
  \begin{array}{cc}
    (\rho^{-1}+\theta^*)I &  \theta^*I\\
    0 & \left(\frac{\theta^*}{1+\rho\theta^*}\right)~I \\
  \end{array}
\right]
\left[
  \begin{array}{c}
    L^* \\
    S^* \\
  \end{array}
\right]
=
\left[
  \begin{array}{c}
    \theta^*\pi_\Omega(D)+\rho^{-1}~q(\tilde{L}) \\
    \frac{\theta^*}{1+\rho\theta^*}~\left(\pi_\Omega(D)-q(\tilde{L})\right)-\xi G\\
  \end{array}
\right]. \label{eq:FTOC_2}
\end{eqnarray}
From the second equation in \eqref{eq:FTOC_2}, we have
\begin{align}
\xi\frac{(1+\rho\theta^*)}{\theta^*}~G+S^*+q(\tilde{L})-\pi_\Omega(D)=0. \label{eq:shrinkS}
\end{align}
But \eqref{eq:shrinkS} is precisely the first-order optimality conditions for the ``shrinkage" problem $$\min_{S\in\reals^{m\times  n}}\left\{\xi\frac{(1+\rho\theta^*)}{\theta^*}\norm{\pi_\Omega(S)}_1+\frac{1}{2}\norm{S+q(\tilde{L})-\pi_\Omega(D)}_F^2\right\}.$$ Thus, $S^*$ is the optimal solution to the ``shrinkage" problem and is given by \eqref{lemeq:S}.
\eqref{lemeq:L} follows from the first equation in \eqref{eq:FTOC_2}, and it implies
\begin{align}
L^*+S^*-\pi_\Omega(D) = \frac{1}{1+\rho\theta^*}~\pi_\Omega(S^*+q(\tilde{L})-D).
\end{align}
Therefore,
\begin{align}
\norm{L^*+S^*-\pi_\Omega(D)}_F &= \frac{1}{1+\rho\theta^*}~\norm{\pi_\Omega\left(S^*+q(\tilde{L})-D\right)}_F, \nonumber\\
&=\frac{1}{1+\rho\theta^*}~\norm{\pi_\Omega\left(\sgn\left(D-q(\tilde{L})\right)\odot\max\left\{|D-q(\tilde{L})|-\xi\frac{(1+\rho\theta^*)}{\theta^*}~E,\ \mathbf{0}\right\}-\left(D-q(\tilde{L})\right)\right)}_F, \nonumber\\
&=\frac{1}{1+\rho\theta^*}~\norm{\pi_\Omega\left(\max\left\{|D-q(\tilde{L})|-\xi\frac{(1+\rho\theta^*)}{\theta^*}~E,\ \mathbf{0}\right\}-|D-q(\tilde{L})|\right)~}_F,\nonumber\\
&=\frac{1}{1+\rho\theta^*}~\norm{\pi_\Omega\left(\min\left\{\xi\frac{(1+\rho\theta^*)}{\theta^*}~E,\ |D-q(\tilde{L})|\right\}\right)}_F,\nonumber\\
%\end{align*}
%\begin{align}
&=\norm{\min\left\{\frac{\xi}{\theta^*}~E,\ \frac{1}{1+\rho\theta^*}~\left|\pi_\Omega\left(D-q(\tilde{L})\right)\right|\right\}}_F, \label{eq:Fnorm}
\end{align}
where the second equation uses \eqref{lemeq:S}. Now let $\phi:\reals_+\rightarrow\reals_+$ be
\begin{align}
\phi(\theta):= \norm{\min\left\{\frac{\xi}{\theta}~E,\ \frac{1}{1+\rho\theta}~\left|\pi_\Omega\left(D-q(\tilde{L})\right)\right|\right\}}_F.
\end{align}

\subsection*{Case 1: $\norm{\pi_\Omega\left(D-q(\tilde{L})\right)}_F\leq\delta$}
$\theta^*=0$, $L^*=q(\tilde{L})$ and $S^*=-\pi_{\Omega^c}\left(q(\tilde{L})\right)$ trivially satisfy all the KKT conditions.
\subsection*{Case 2: $\norm{\pi_\Omega\left(D-q(\tilde{L})\right)}_F>\delta$}
It is easy to show that $\phi(.)$ is a strictly decreasing function of $\theta$. Since $\phi(0)=\norm{\pi_\Omega\left(D-q(\tilde{L})\right)}_F>\delta$ and $\lim_{\theta\rightarrow\infty}\phi(\theta)=0$, there exists a unique $\theta^*>0$ such that $\phi(\theta^*)=\delta$. Given $\theta^*$, $S^*$ and $L^*$ can then be computed from equations \eqref{lemeq:S} and \eqref{lemeq:L}, respectively. Moreover, since $\theta^*>0$ and $\phi(\theta^*)=\delta$, \eqref{eq:Fnorm} implies that $L^*$, $S^*$ and $\theta^*$ satisfy the KKT conditions.

We now show that $\theta^*$ can be computed in $\cO(|\Omega|\log(|\Omega|))$ time. Let $A:=\pi_\Omega\left(|D-q(\tilde{L})|\right)$ and $0\leq a_{(1)}\leq a_{(2)}\leq ... \leq a_{(|\Omega|)}$ be the $|\Omega|$ elements of the matrix $A$ corresponding to the indices $(i,j)\in\Omega$ sorted in increasing order, which can be done in $\cO(|\Omega|\log(|\Omega|))$ time. Defining $a_{(0)}:=0$ and $a_{(|\Omega|+1)}:=\infty$, we then have for all $j\in\{0,1,...,|\Omega|\}$ that
\begin{align}
\frac{1}{1+\rho\theta}~a_{(j)} \leq \frac{\xi}{\theta} \leq \frac{1}{1+\rho\theta}~a_{(j+1)} \Leftrightarrow \frac{1}{\xi}~a_{(j)}-\rho \leq \frac{1}{\theta} \leq \frac{1}{\xi}~a_{(j+1)}-\rho.
\end{align}
For all $\bar{k}< j\leq |\Omega|$ define $\theta_j$ such that $\frac{1}{\theta_j}=\frac{1}{\xi}~a_{(j)}-\rho$ and let $\bar{k}:=\max\left\{j: \frac{1}{\theta_j}\leq 0,\ j\in\{0,1,...,|\Omega|\}\right\}$. Then for all $\bar{k}< j\leq |\Omega|$
\begin{align}
\phi(\theta_j)=\sqrt{\left(\frac{1}{1+\rho\theta_j}\right)^2~\sum_{i=0}^j a^2_{(i)}+(|\Omega|-j)~\left(\frac{\xi}{\theta_j}\right)^2}.
\end{align}
Also define $\theta_{\bar{k}}:=\infty$ and $\theta_{|\Omega|+1}:=0$ so that $\phi(\theta_{\bar{k}}):=0$ and $\phi(\theta_{|\Omega|+1})=\phi(0)=\norm{A}_F>\delta$. Note that $\{\theta_j\}_{\{\bar{k}< j\leq |\Omega|\}}$ contains all the points at which $\phi(\theta)$ may not be differentiable for $\theta\geq 0$.
Define $j^*:=\max\{j:\ \phi(\theta_j)\leq\delta,\ \bar{k}\leq j\leq |\Omega|\}$. Then $\theta^*$ is the unique solution of the system
\begin{align}
\label{eq:root}
\sqrt{\left(\frac{1}{1+\rho\theta}\right)^2~\sum_{i=0}^{j^*} a^2_{(i)}+(|\Omega|-j^*)~\left(\frac{\xi}{\theta}\right)^2}=\delta \mbox{ and } \theta>0,
\end{align}
since $\phi(\theta)$ is continuous and strictly decreasing in $\theta$ for $\theta\geq 0$. Solving the equation in \eqref{eq:root} requires finding the roots of a fourth-order polynomial (a.k.a. quartic function); therefore, one can compute $\theta^*>0$ using the algebraic solutions of quartic equations (as shown by Lodovico Ferrari in 1540), which requires $\cO(1)$ operations. Note that if $\bar{k}=|\Omega|$, then $\theta^*$ is the solution of the equation
\begin{align}
\sqrt{\left(\frac{1}{1+\rho\theta^*}\right)^2~\sum_{i=1}^{|\Omega|} a^2_{(i)}}=\delta,
\end{align}
i.e. $\theta^*= \rho^{-1}\left(\frac{\norm{A}_F}{\delta}-1\right)=\rho^{-1}\left(\frac{\norm{\pi_\Omega\left(D-\tilde{L}\right)}_F}{\delta}-1\right)$.
Hence, we have proved that problem~$(P_{ns})$ can be solved efficiently.

Now, suppose that $\delta=0$. Since $L^*+S^*=\pi_\Omega(D)$, problem~$(P_{ns})$ %\eqref{eq:subproblem}
can be written as
\begin{equation}
\label{eq:subproblem_delta0}
\begin{array}{ll}
\min_{S\in\reals^{m\times n}} & \xi\rho\norm{\pi_\Omega(S)}_1+\frac{1}{2}\norm{S+q(\tilde{L})-\pi_\Omega(D)}_F^2.
\end{array}
\end{equation}
Then \eqref{lemeq:XS_nonsmooth_delta0} trivially follows from first-order optimality conditions for the above problem and the fact that $L^*=\pi_\Omega(D)-S^*$.
\end{proof}

\end{document}